\documentclass{article}
\usepackage{amsmath}
\usepackage{amsfonts}
\usepackage{amsthm}
\usepackage{latexsym}
\usepackage{amssymb}
\usepackage{verbatim}
\usepackage{enumerate}
\usepackage[inner]{showlabels}


\def\noteson{%
\gdef\note##1{\mbox{}\marginpar[##1]{%
     ##1}}}

\noteson

\font\logic=msam10 at 10pt
\newcommand{\forces}{\mbox{\logic\char'015}}
\newcommand{\restrict}{\mbox{\logic\char'026}}
\def\underTilde#1{{\baselineskip=0pt\vtop{\hbox{$#1$}\hbox{$\sim$}}}{}}

\newcommand{\uTDelta}{\underTilde{\Delta}}
\newcommand{\uTSigma}{\underTilde{\Sigma}}
\newcommand{\uTPi}{\underTilde{\Pi}}

\newcommand{\less}{\mathord{<}}

\newcommand{\breals}{\omega^{\omega}}

\newcommand{\ot}{\mathrm{o.t.}}

\newcommand{\dom}{\operatorname{dom}}
\newcommand{\creals}{{^{\omega}}2}
\newcommand{\bbP}{\mathbb{P}}
\newcommand{\proj}{\mathrm{proj}}
\newcommand{\bq}{{\bf q}}
\newcommand{\bQ}{{\bf Q}}

\newcommand{\bL}{\mathbf{L}}
\newcommand{\bV}{\mathbf{V}}

\newcommand{\ZFC}{\mathsf{ZFC}}
\newcommand{\ZF}{\mathsf{ZF}}

\newcommand{\cA}{\mathcal{A}}

\newcommand{\cP}{\mathcal{P}}
\newcommand{\cT}{\mathcal{T}}

\newtheorem{thrm}{Theorem}[section]
\newtheorem{lem}[thrm]{Lemma}

\newtheorem{prop}[thrm]{Proposition}

\newtheoremstyle{hdefinition}%
  {\topsep}%
  {\topsep}%
  {\upshape}
  {}%
  {\bfseries}%
  {.}
  { }%
  {\thmnumber{#2 }\thmname{#1}\thmnote{ \rm(#3)}}%

\newtheoremstyle{hclaim}%
  {\topsep}%
  {\topsep}%
  {\itshape}%
  {}%
  {\bfseries}%
  {.}
  { }%
  {\thmname{#1}\thmnote{ \rm#3}}%

\newtheoremstyle{hnotation}%
  {\topsep}%
  {\topsep}%
  {\upshape}%
  {}%
  {\bfseries}%
  {.}
  { }%
  {\thmname{#1}\thmnote{ \rm#3}}%

\theoremstyle{hclaim}
\newtheorem*{claim*}{Claim}

\theoremstyle{hdefinition}
\newtheorem{df}[thrm]{Definition}

\newtheorem{remark}[thrm]{Remark}

\theoremstyle{hclaim}

\theoremstyle{hnotation}

\begin{document}

\title{Universally measurable sets may all be $\uTDelta^{1}_{2}$}


\author{Paul B. Larson\thanks{Supported in part by NSF Grants
  DMS-1201494 and DMS-1764320.}\\
 Miami University\\
Oxford, Ohio USA\\
\and Saharon Shelah\thanks{Research partially supported by NSF grant no: 1101597, and by the
European Research Council grant 338821. Paper No. 1178 on Shelah's list. }\\
Hebrew University of Jerusalem\\Rutgers University}




\maketitle

\begin{abstract}
  We produce a forcing extension of the constructible universe $\bL$ in which
  every universally measurable set of reals is $\uTDelta^{1}_{2}$, partially answering question CG from David Fremlin's problem list \cite{FQL}. The analogous result for category holds in the same model.
\end{abstract}

We let $\omega^{\omega}$ denote the Baire space, the set of functions from $\omega$ to $\omega$, and refer to its elements as \emph{reals}.
A subset of a Polish space $X$ is said to be \emph{universally measurable} if it is measured by the completion of any $\sigma$-additive Borel measure on
$X$. Equivalently, $A \subseteq X$ is universally measurable if and only if $f^{-1}[A]$ is Lebesgue measurable whenever $f \colon \breals \to X$ is a Borel function (see \cite{Ke, Ni}, and 434D of \cite{F4}, for instance). This characterization induces the corresponding notion for category : we will say that a set $A \subseteq X$ is \emph{universally categorical} if and only if $f^{-1}[A]$ has the property of Baire whenever $f \colon \breals \to X$ is a Borel function. (The term \emph{universally Baire} has already been established with a different meaning \cite{FMW}, implying both universal measurability and  universal categoricity.) The collections of the universally measurable subsets of $X$ and the universally categorical subsets of $X$ are both $\sigma$-algebras on $X$.


A subset $A$ of a Polish space $X$ is $\uTDelta^{1}_{2}$ if $A$ and $X \setminus A$ are continuous images of coanalytic sets. We refer the reader to \cite{Ke} for background on this definition and for information on the projective sets in general. In this paper we show how to force over any model of the form $\bL[a]$, where $a \subseteq \omega$, to produce a model in which all subsets of $\breals$ which are either universally measurable or universally categorical are $\uTDelta^{1}_{2}$. Since all uncountable Polish spaces are Borel-isomorphic (see Theorem 17.41 of \cite{Ke}), this implies the same fact for subsets of any Polish space. The following then is the main theorem of this paper (Theorem \ref{mainthrm} gives a more explicit statement).

\begin{thrm}\label{specthrm} If, for some $a \subseteq \omega$, $\bV \mathord{=} \bL[a]$, then there is a proper forcing extension in which every universally measurable subset of any Polish space is $\uTDelta^{1}_{2}$, and every universally categorical subset of any uncountable Polish space is $\uTDelta^{1}_{2}$.
\end{thrm}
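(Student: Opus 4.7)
The plan is to build the desired extension as a countable support iteration $\langle P_{\alpha}, \dot{Q}_{\alpha} : \alpha < \omega_{2} \rangle$ of proper forcings over $\bV = \bL[a]$. Since $\bL[a]$ satisfies GCH, $\diamondsuit_{\omega_{2}}$ is available to book-keep through all potential ``problem pairs'': a pair consisting of a $\uTSigma^{1}_{2}$-code $c$ for some $\cA \in \bA$ and a name $\dot{A}$ for a candidate set $A \subseteq \breals$ that might end up in $\cA$ without being $\uTDelta^{1}_{2}$. The length $\omega_{2}$ is dictated by the size of the class of candidate pairs that must be handled before the end of the iteration.

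At stage $\alpha$ the $\diamondsuit$-sequence predicts a candidate $(c_{\alpha}, \dot{A}_{\alpha})$. In $\bV[G_{\alpha}]$ we check the predicate: if the predicted $A$ fails to lie in the $\sigma$-algebra coded by $c_{\alpha}$, or if $A$ already has a $\uTDelta^{1}_{2}$ representation, then $\dot{Q}_{\alpha}$ is taken to be a bookkeeping placeholder. Otherwise $\dot{Q}_{\alpha}$ is chosen to be a proper forcing that adds a Borel witness $f \colon \breals \to \breals$ such that $f^{-1}[A]$ fails the defining property of $\cA$: non-Lebesgue-measurable in the case of universal measurability, or lacking the Baire property for universal categoricity. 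Standard variants of Cohen or random forcing, suitably arranged to preserve properness, produce such witnesses. Consequently in $\bV[G_{\omega_{2}}]$ every $A \in \cA$ with $\cA \in \bA$ must be $\uTDelta^{1}_{2}$, since otherwise the bookkeeping would at some stage have destroyed its membership in $\cA$.

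Three technical obstacles dominate. First, properness must be preserved across an $\omega_{2}$-stage iteration; this is handled by Shelah's preservation theorems for countable support iterations, modulo verifying that each $\dot{Q}_{\alpha}$ meets the appropriate hypotheses (preservation of $\omega_{1}$, of outer measure, and so on). Second — and in my view the crux — the $\uTSigma^{1}_{2}$ coding of $\cA$ and the $\uTPi^{1}_{2}$ characterization of $A \in \cA$ must be robust under the iteration: a $\uTSigma^{1}_{2}$ code in $\bV[G_{\alpha}]$ must define in a suitable sense the same $\sigma$-algebra in $\bV[G_{\omega_{2}}]$, and the $\uTPi^{1}_{2}$ membership predicate must be sufficiently absolute between intermediate and final models. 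This constrains which forcings $\dot{Q}_{\alpha}$ are admissible and is presumably why the authors announce an explicit family $\bA$ rather than attempting to treat arbitrary $\sigma$-algebras. Third, the $\diamondsuit_{\omega_{2}}$ bookkeeping must anticipate candidates arising from reals added at arbitrarily late stages; this is a standard but delicate application of guessing nice names, where the match between the predicted name and the actual set must hold in $\bV[G_{\alpha}]$ rather than merely in the final model.
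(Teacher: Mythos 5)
Your proposal runs in the opposite direction from the paper and I do not think it can be repaired. You set up a destruction argument: bookkeep through candidate pairs $(c,\dot A)$, and when the predicted $A$ is in $\cA$ but not $\uTDelta^1_2$, force a Borel $f$ with $f^{-1}[A]$ non-measurable (or not Baire), so that $A$ is expelled from $\cA$. But the quantifier structure of the conclusion makes this template incoherent. The sets you ultimately care about are the subsets of $\breals^{\bV[G_{\omega_2}]}$ that are universally measurable \emph{in the final model}. For such an $A$, in $\bV[G_{\omega_2}]$ there is by definition \emph{no} Borel $f$ with $f^{-1}[A]$ non-measurable. So whatever you force at stage $\alpha$ to ``destroy'' $A$ must fail to persist: either the added $f$ stops being a witness once later reals appear, or the set you destroyed at stage $\alpha$ is not the same object as $A$ in $\bV[G_{\omega_2}]$ (both in fact happen, since later stages add reals to $A$ and to its complement). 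Your argument ``otherwise the bookkeeping would have destroyed its membership in $\cA$'' is therefore circular: the sets that survive are exactly the ones you could not have destroyed. This is unlike the familiar $\diamondsuit$-bookkeeping situations (killing Suslin trees, specializing $\aleph_1$-dense orders), where the objects live at size $\aleph_1$, are genuinely present in intermediate models, and the killing is permanent. Here the objects are arbitrary subsets of $\breals$, of which there are $2^{2^{\aleph_0}}$, and they are not present at any $\alpha<\omega_2$. Finally, the step ``standard variants of Cohen or random forcing produce such witnesses'' has no content: there is no proper forcing that, given an arbitrary set $A$, adds a Borel $f$ making $f^{-1}[A]$ non-measurable, precisely because $A$ is not a fixed Borel object that $f$ can be tuned against.

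The paper's route is the positive one you would want instead. The iterands $Q_{\bar C,F,g}$ code arbitrary subsets of $\omega_1$ (hence arbitrary $\aleph_1$-sized subsets of $\cP(\omega)$) into the generic in a $\uTSigma^1_2$ way (Lemma \ref{codingitlem}), so in the final model every set of reals of size $\aleph_1$ is $\uTSigma^1_2$. The real work is then Lemma \ref{representinglem}: the iteration is $\cA$-representing, meaning that for $A\in\cA$ every new real lands in a ground-model Borel set either contained in or disjoint from $A$. Combined with the $\kappa$-c.c.\ reflection (``$A\in\cA$'' is $\uTPi^1_2$, so it reflects to club many $\alpha<\kappa$ and $A$ is $\bV[G_\alpha]$-Borel for some $\alpha$) and CH at intermediate stages, this gives that $A$ and its complement are each a union of $\aleph_1$ Borel sets, hence $\uTDelta^1_2$. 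That representing lemma is proved with condition trees (Lemma \ref{fusionlemma}), the pathology of $F$, and the Fubini property of the ideal system; none of this has a counterpart in your scheme. So the gap is not a missing technical step in an otherwise-correct plan; the overall strategy needs to be replaced by a constructive coding-plus-reflection argument.
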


The result for universally measurable sets answers part of problem CG on David Fremlin's problem list \cite{FQL}.






 Since there are only continuum many $\uTDelta^{1}_{2}$ sets, our result also strengthens (modulo the anti-large cardinal hypothesis $\bV \mathord{=} \bL[a]$) a previous result of the authors with Itay Neeman \cite{LrNeSh}, which showed the consistency of the statement that the set of universally measurable sets has the same cardinality as $\breals$. We note that (unlike the results in \cite{LrNeSh}) some anti-large cardinal hypothesis
is needed for the results in this paper, since the existence of infinitely many Woodin cardinals for instance implies that every projective set of reals is universally measurable \cite{ShWoo}, and there are (assuming $\ZF$) projective sets which are not $\uTDelta^{1}_{2}$ (see Theorem 37.7 of \cite{Ke}).


\section{Outline of the proof}\label{outlinesec}

The proof of Theorem \ref{specthrm} is an application of forcing machinery developed by the second author and his collaborators (especially \cite{Sh64, Sh98}, but we also make use of results from \cite{KellSh}). The proof proceeds by forcing over a model of the form $\bL[a]$ (for any $a \subseteq \omega)$ with a countable support iteration of proper partial orders, and makes use of the following theorem, which is Theorem III.4.1 in \cite{ShPIF}.

\begin{thrm}\label{shthrm}
  Suppose that $\kappa$ is a regular cardinal such that $\mu^{\aleph_{0}} < \kappa$ for all $\mu < \kappa$ and that  $\bar{\bbP} = \langle \bbP_{\alpha}, \dot{Q}_{\beta} : \alpha \leq \kappa, \beta < \kappa \rangle$ is a countable support iteration such that each $\bbP_{\alpha}$ forces the corresponding $\dot{Q}_{\alpha}$ to be a proper forcing of cardinality less than $\kappa$. Then $\bbP_{\kappa}$ is $\kappa$-c.c., and for each $\alpha < \kappa$, $\bbP_{\alpha}$ has a dense subset of cardinality less than $\kappa$. Furthermore, for all $\alpha < \kappa$, $\bbP_{\alpha}$ forces that $2^{\aleph_{0}} < \kappa$.
\end{thrm}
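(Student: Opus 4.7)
The plan is to establish the three conclusions via a single induction on $\alpha \le \kappa$, proving that for each $\alpha < \kappa$ the forcing $\bbP_\alpha$ has a dense subset $D_\alpha$ of cardinality $\mu_\alpha < \kappa$. The other two clauses will fall out from this by short standard arguments, since a dense subset of size $\mu_\alpha$ automatically makes $\bbP_\alpha$ $\mu_\alpha^+$-c.c., and hence $\kappa$-c.c.

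For the induction itself, the successor step passes from $\mu_\alpha$ to $\mu_{\alpha+1}$ as follows: the $\kappa$-c.c.\ of $\bbP_\alpha$ together with $\bbP_\alpha \Vdash |\dot Q_\alpha| < \kappa$ lets one pin down a single ordinal $\nu_\alpha < \kappa$ with $\bbP_\alpha \Vdash \dot Q_\alpha \subseteq \nu_\alpha$, and then counting nice names from antichains of size $\le \mu_\alpha$ into $\nu_\alpha$ yields $\mu_{\alpha+1} \le \mu_\alpha^{\aleph_0} \cdot \nu_\alpha < \kappa$ using the hypothesis $\mu^{\aleph_0} < \kappa$. I expect the limit step to be the main obstacle. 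When $\cf(\alpha) > \omega$ every countable support is bounded in $\alpha$ and $\bbP_\alpha = \bigcup_{\beta<\alpha}\bbP_\beta$ inherits a dense subset of cardinality $|\alpha| \cdot \sup_{\beta<\alpha}\mu_\beta < \kappa$ by regularity. When $\cf(\alpha) = \omega$, supports may be cofinal; here one enumerates the at most $|\alpha|^{\aleph_0} < \kappa$ countable subsets $s \subseteq \alpha$, and for each $s$ bounds the number of conditions with support $s$ by $\prod_{\beta \in s} \mu_\beta^{\aleph_0} \cdot \nu_\beta \le \lambda^{\aleph_0}$, where $\lambda = \sup_{\beta<\alpha} \mu_\beta \cdot \nu_\beta < \kappa$ by regularity of $\kappa$ and $\lambda^{\aleph_0} < \kappa$ by hypothesis.

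The remaining clauses follow quickly. For the $\kappa$-c.c.\ of $\bbP_\kappa$, a $\Delta$-system argument (applicable since $\kappa$ is regular and $\mu^{\aleph_0} < \kappa$ for all $\mu < \kappa$) applied to the countable supports of $\kappa$ given conditions $p_\xi$ produces $\kappa$ many with a common root $r$; by regularity $r \subseteq \beta$ for some $\beta < \kappa$. The restrictions $p_\xi \upharpoonright \beta$ live in $\bbP_\beta$, which admits antichains of size only $\mu_\beta < \kappa$, so two of them are compatible there, and disjointness of the supports outside $r$ upgrades this to a common extension in $\bbP_\kappa$. For the continuum bound, nice $\bbP_\alpha$-names for reals are $\omega$-sequences of antichains in $D_\alpha$, giving at most $\mu_\alpha^{\aleph_0} < \kappa$ such names; since $\bbP_\alpha$ preserves $\kappa$ as a cardinal (having a dense subset of size less than $\kappa$), this forces $2^{\aleph_0} < \kappa$.
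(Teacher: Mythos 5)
The paper does not prove this statement; it is cited verbatim as Theorem~III.4.1 of Shelah's \emph{Proper and Improper Forcing}, so there is no proof in the paper to compare against. Your architecture (induct on $\alpha$ to get dense $D_\alpha\subseteq\bbP_\alpha$ of size $\mu_\alpha<\kappa$, then a $\Delta$-system argument for the $\kappa$-c.c.\ and name-counting for the continuum bound) is the standard one, but there is a genuine gap: properness of the iterands is never actually used, and without it none of your cardinality counts close. In the successor step you claim the number of nice $\bbP_\alpha$-names for an ordinal below $\nu_\alpha$ is $\mu_\alpha^{\aleph_0}\cdot\nu_\alpha$; but such a name is an antichain in $D_\alpha$ together with a labeling into $\nu_\alpha$, and the only a priori bound on the antichain's size is $\mu_\alpha$, giving $\nu_\alpha^{\mu_\alpha}$, which the hypothesis $\mu^{\aleph_0}<\kappa$ does not control. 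The identical overcount appears at the cofinality-$\omega$ limit step (the factor bounding the names at each coordinate $\beta\in s$) and again in the last clause, where you claim $\mu_\alpha^{\aleph_0}$ nice names for reals: an $\omega$-sequence of antichains in $D_\alpha$ gives $\bigl(2^{\mu_\alpha}\bigr)^{\aleph_0}=2^{\mu_\alpha}$ possibilities, not $\mu_\alpha^{\aleph_0}$.

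What is missing is the reduction, via properness, to \emph{countable} names. For a proper $\bbP$ with dense $D$ of size $\mu$ and a name $\dot q$ for an ordinal below $\nu$, take a countable $M\prec H(\theta)$ containing the relevant data and an $(M,\bbP)$-generic $q\le p$; then $q$ forces $\dot q$ to lie in $M$ and to be read off from $G\cap M$, so densely below $q$ one may replace $\dot q$ by a nice name built from a countable antichain in $D\cap M$ labeled by ordinals in $M\cap\nu$. Only after this replacement is the relevant family of names of size at most $\mu^{\aleph_0}\cdot\nu^{\aleph_0}$, and only then does $\mu^{\aleph_0}<\kappa$ do the work you want. This step, applied coordinatewise through the iteration and again when counting names for reals, is the heart of Shelah's argument; its absence is why your $\aleph_0$ exponents, though they land in the right places, are unjustified.
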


We will apply this theorem with ground models of the form $\bL[a]$ for $a \subseteq \omega$, which satisfy the Generalized Continuum Hypothesis. Although we could let $\kappa$ be any regular cardinal with $\kappa^{<\kappa} =  \kappa$ we will restrict ourselves to the case $\kappa = \omega_{2}^{\bV}$. Each step of our iterations will be an $(\omega, \infty)$-distributive partial order of cardinality continuum (see Definition \ref{podef}), and will force the Continuum Hypothesis (CH) to hold. To see that Theorem \ref{shthrm} applies, we need to know that each $\bbP_{\alpha}$ preserves the statement that $2^{\aleph_{0}} < \kappa$. This is not hard to show directly for the partial orders we consider, but it also follows by applying Theorem \ref{shthrm} to the modified iteration where each $\dot{Q}_{\alpha}$ is the $\bbP_{\alpha}$-name for either our original $\dot{Q}_{\alpha}$ if CH holds, and the trivial forcing if it fails. The theorem then implies that the second case never holds. We then have from Theorem \ref{shthrm} that each $\bbP_{\alpha}$ (in our original, intended iteration) will have a dense subset of cardinality less than $\kappa$, and will therefore preserve the statement $2^{\aleph_{1}} \leq \kappa$.


We will use the following definition.


\begin{df} Given a partial order $P$ and a $\sigma$-algebra $\cA$ on $\breals$ we say that $P$ is $\cA$-\emph{representing}
if for each $A \in \cA$ (in $\bV$), every element of $\breals$ in any forcing extension by $P$ is in a Borel set with a code in $\bV$ which is either contained in or disjoint from $A$.
\end{df}

Each tail of the iterations we consider here is $\cA$-representing when $\cA$ is either the algebra of universally measurable or universally categorical sets. The overall strategy of our proof is then summarized by the following proposition.

\begin{prop}
  Suppose that $\bar{\bbP}$ is a forcing iteration of length $\kappa$ satisfying the conditions in Theorem \ref{shthrm}. Let $G \subseteq \bbP_{\kappa}$ be a $\bV$-generic filter, and for each $\alpha < \kappa$ let $G_{\alpha}$ be the restriction of $G$ to $\bbP_{\alpha}$. Let $\cA$ $(\alpha \leq \kappa)$ be such that either each $\cA_{\alpha}$ is the set of universally measurable subsets of $\breals$ in $\bV[G_{\alpha}]$ or
  each $\cA_{\alpha}$ is the set of universally categorical subsets of $\breals$ in $\bV[G_{\alpha}]$. Suppose that each tail $\bbP_{[\alpha, \kappa)}$ of $\bar{\bbP}$ is $\cA_{\alpha}$-representing. Then for each $A \in \cA_{\kappa}$ there exists an $\alpha < \kappa$ such that $A$ is equal to the union of all the Borel subsets of $A$ in $\bV[G]$ having codes in $\bV[G_{\alpha}]$.
\end{prop}

\begin{proof} Fixing $A$, we have that $A$ is the realization of a $\bbP_{\kappa}$-name $\dot{A}$ in the ground model $\bV$. We have the following facts:
\begin{itemize}
\item each $\bbP_{\alpha}$ preserves the statement $2^{\aleph_{0}} < \kappa$;
\item $\bbP_{\kappa}$ preserves the regularity of $\kappa$;
\item for each $A \subseteq \breals$ the statement that $A$ is universally measurable and the statement that $A$ is universally categorical are both $\uTPi^{1}_{2}$ in $A$.
\end{itemize}
Together these imply that for club many $\alpha < \kappa$ (more importantly, at least one), the following hold.
  \begin{itemize}
    \item For each $\bbP_{\alpha}$-name $\sigma$ for an element of $\breals$, some element of $G_{\alpha}$ decides the statement $\sigma \in \dot{A}$ (with respect to $\bbP_{\kappa}$, with $\sigma$ interpreted as a $\bbP_{\kappa}$-name).
    \item For each Borel set $B$ in $V[G]$ with a code in $\bV[G_{\alpha}]$, if $B \cap \dot{A}_{G}$ is nonempty, then so is $B \cap \dot{A}_{G} \cap \bV[G_{\alpha}]$.
    \item For each Borel set $B$ in $V[G]$ with a code in $\bV[G_{\alpha}]$, if $B \setminus \dot{A}_{G}$ is nonempty, then so is $(B \setminus \dot{A}_{G}) \cap \bV[G_{\alpha}]$.
    \item If $\dot{A}_{G}$ is universally measurable in $\bV[G]$, then $\dot{A}_{G} \cap \bV[G_{\alpha}]$ is universally measurable in $\bV[G_{\alpha}]$.
    \item If $\dot{A}_{G}$ is universally categorical in $\bV[G]$, then $\dot{A}_{G} \cap \bV[G_{\alpha}]$ is universally categorical in $\bV[G_{\alpha}]$.
  \end{itemize}
To finish the proof, let $x$ be an element of $A$. Since $A \cap \bV[G_{\alpha}]$ is in $\cA_{\alpha}$, and the tail $\bbP_{[\alpha, \kappa)}$ of $\bar{\bbP}$ is $\cA_{\alpha}$-representing, $x$ is a member of a Borel set $B$ with a code in $\bV[G_{\alpha}]$ such that $B \cap \bV[G_{\alpha}]$ is either contained in or disjoint from $A$. Since $B$ is not disjoint from $A$, $B \cap \bV[G_{\alpha}]$ is not either. So $B \cap \bV[G_{\alpha}]$ is contained in $A$, and $B$ is also.
\end{proof}


The forcing construction in this paper produces a model in which every set of reals of cardinality $\aleph_{1}$ is $\uTSigma^{1}_{2}$ (the fact that each $\bbP_{\alpha}$ preserves the inequality $2^{\aleph_{1}} \leq \kappa$ makes this possible with an iteration of length $\kappa$). Given this, any $A \subseteq \breals$ with the property that $A$ and $\breals \setminus A$ are both unions of $\aleph_{1}$-many Borel sets is $\uTDelta^{1}_{2}$ (see Remark \ref{aleph1Borelrem}).



\begin{remark}\label{reducerem}
The outline in this section reduces the proof of
 the main theorem to establishing the following regarding the iterations $\langle \bbP_{\alpha}, \dot{Q}_{\beta}: \alpha \leq \kappa, \beta < \kappa \rangle$ considered in this paper (iterations as in Definition \ref{QCFdef} and their tails):
\begin{itemize}
\item each $\bbP_{\alpha}$ forces that $\dot{Q}_{\alpha}$ is a proper $(\omega, \infty)$-distributive forcing of cardinality at most $2^{\aleph_{0}}$ making CH hold (established in Remark \ref{popropsrem} and Lemma \ref{properlem});
\item $\bbP_{\kappa}$ forces that every subset of $\breals$ of cardinality $\aleph_{1}$ is $\uTSigma^{1}_{2}$ (shown in Lemma \ref{codingitlem});
\item each tail of the iteration $\bbP_{\kappa}$ is $\cA$-representing when $\cA$ is either the set of universally measurable subsets of $\breals$ or the set of universally categorical subsets of $\breals$. This is shown in Lemma \ref{representinglem}.
\end{itemize}
\end{remark}

At the end of the paper we prove one additional result not directly related to the main theorem. The paper \cite{LrNeSh} introduced the following notion : given a ground model set $A \subseteq \breals$, the \emph{Borel reinterpretation} of $A$ in a forcing extension is the union of all the ground model Borel sets contained in $A$, each reinterpreted in the extension. This offered a characterization of the universally measurable sets as the sets $A \subseteq \breals$ with the property that the Borel reinterpretations of $A$ and $\breals \setminus A$ are complements in any extension by random forcing (in particular, random forcing is $\cA$-representing, when $\cA$ is the collection of universally measurable sets). Similarly, the universally categorical sets are the
sets $A \subseteq \breals$ with the property that the Borel reinterpretations of $A$ and $\breals \setminus A$ are complements in any extension by Cohen forcing.

Given a definable $\sigma$-ideal $\cA$ on $\breals$, we say that a partial order $P$ has the $\cA$-\emph{reinterpretation property} if it is $\cA$-representing, and, in addition the Borel reinterpretations of members of $\cA$ in the ground model are in $\cA$ as defined in forcing extensions by $P$. Many forcings have this property for the universal measurable sets, including random forcing and Sacks forcing; see \cite{LZpime}. Theorem \ref{umeasreint} shows that the iterations considered in this paper have the $\cA$-reinterpretation property when $\cA$ is the set of universally measurable subsets of $\breals$.
This is in some sense a negative result : an iteration forcing the statement ``every universally measurable set has the property of Baire" (whose consistency is still an open question) cannot have the reinterpretation property for universally measurable sets if, for instance, it is applied to a model of Martin's Axiom (more generally, to a model with a medial limit).







\section{Coding subsets of $\omega_{1}$ by reals}\label{stepsec}

We let $\Omega$ denote the set of countable limit ordinals. A \emph{ladder system} on $\omega_{1}$ is a sequence
$\langle C_{\eta} : \eta \in \Omega \rangle$ such that each $C_{\eta}$ is a cofinal subset of $\eta$ of ordertype $\omega$.
For $C$ an infinite set of ordinals and $n \in \omega$, we write $C(n)$ for the unique $\alpha \in C$ such that $|C \cap \alpha| = n$.

Given two sequences $s, t$, we write $s \trianglelefteq t$ mean that $s$ is an initial segment of $t$.
Given $s \in 2^{\less\omega}$, we let $[s]$ denote $\{ x \in \creals : s \trianglelefteq x\}$.

\begin{df}\label{denfundef}
We define a \emph{dense function} to be a partial function
$F \colon 2^{\omega} \to 2$ such that for each $s \in 2^{<\omega}$,
$F[[s] \cap \dom(F)] = 2$.
\end{df}


\begin{df}\label{podef}
We define the forcing $Q_{\bar{C}, F, g}$, where
\begin{itemize}
\item $\bar{C} = \langle C_{\eta} : \eta \in \Omega\rangle$ is a ladder system on $\omega_{1}$;
\item $F \colon 2^{\omega} \to 2$ is a dense partial function;
\item $g$ is a function from $\Omega$ to $2$.
\end{itemize}
The conditions of $Q_{\bar{C}, F, g}$ are the functions $p$ such that,
\begin{itemize}
\item the domain of $p$ is a countable ordinal $\delta_{p}$;
\item the range of $p$ is a subset of $2$;
\item for all $\eta \in (\delta_{p} + 1) \cap \Omega$,
$\langle p(C_{\eta}(i)) : i < \omega \rangle \in \dom(F)$ and
\[F(\langle p(C_{\eta}(i)) : i < \omega \rangle) = g(\eta).\]
\end{itemize}
The order on $Q_{\bar{C}, F, g}$ is extension.
\end{df}

\begin{remark}\label{popropsrem}Each partial order of the form $Q_{\bar{C}, F, g}$ has cardinality $2^{\aleph_{0}}$, as does its transitive closure (so $Q_{\bar{C}, F, g}$
is in $H(\mathfrak{c}^{+})$).
Each such partial order also forces $2^{\aleph_{0}} = \aleph_{1}$. To see this, note first that by Lemma \ref{properlem} below, $Q_{\bar{C}, F, g}$ adds no new elements of
$\breals$. To see that forcing with $Q_{\bar{C}, F, g}$ wellorders $(2^{\omega})^{\bV}$ in ordertype $\omega_{1}$, let $\bar{D}$ be a ladder system
on $\omega_{1}$ such that
\begin{itemize}
\item $D_{\eta} \cap C_{\eta} = \emptyset$ for each $\eta \in \Omega$ of ordertype greater than $\omega$ and
\item $\sup\{ D_{\eta}(0) : \eta \in \Omega \} = \omega_{1}$.
\end{itemize}
Let $G \colon \omega_{1} \to 2$ be a $V$-generic function for $Q_{\bar{C}, F, g}$, and note that each element of $(2^{\omega})^{V}$ is equal to
$\langle G(D_{\eta}(i)) : i < \omega \rangle$ for some $\eta \in \Omega$ (this follows from a standard genericity argument, and we leave the details to the reader).
\end{remark}

The following lemma establishes basic properties of the partial orders $Q_{\bar{C}, F, g}$.

\begin{lem}\label{properlem} If $\bar{C}$ is a ladder system on $\omega_{1}$, $F \colon 2^{\omega} \to 2$ is a partial dense function and
$g$ is a function from $\Omega$ to $2$ then the following hold.
\begin{enumerate}
\item For each condition $p \in Q_{\bar{C}, F, g}$, each $\gamma < \omega_{1}$ and each finite partial function $s$ from $\gamma\setminus \delta_{p}$ to $2$, there is a condition $q \leq p$ with $\delta_{q} \geq \gamma$ and $q \restrict \dom(s) = s$.
\item The partial order $Q_{\bar{C}, F, g}$ is proper and $(\omega, \infty)$-distributive.
\end{enumerate}
\end{lem}

\begin{proof}
We prove the first part by induction on $\gamma$. There is nothing to do in the case where $\gamma \leq \delta_{p}$. The case where $\gamma$ is the successor of some ordinal $\eta$ follows from the induction hypothesis for $\eta$ along with an extension to add $\eta$ to the domain of the condition $q$, agreeing with $s$ if necessary. For the case where $\gamma$ is a limit ordinal, an application of the induction hypothesis can be used to find an extension $p'$ of $p$ containing $s$. Since $F$ is dense there is a $y \in 2^{\omega}$ extending
\[\langle p'(C_{\gamma}(i)) : i < \omega,\, C_{\gamma}(i) < \delta_{p'}\rangle\] with $F(y) = g(\gamma)$. Applying the induction hypothesis $\omega$ any times (with the values $C_{\gamma}(i) + 1$ for which $C_{\gamma}(i) \not\in \dom(p')$ successively in the role of $\gamma$, and the restriction of $y$ to the values $C_{\gamma}(i)$ in the role of $s$) gives a condition $q$ as desired.

The argument for the second part is similar. Let $p_{0}$ be a condition in $Q_{\bar{C}, F, g}$.
Let $X$ be a countable elementary substructure of $H(\beth_{3}^{+})$ with $p_{0},\bar{C}, F$ and $g$ in $X$.
Let $\gamma = X \cap \omega_{1}$ and let $y$ be an element of $2^{\omega}$ extending
\[\langle p_{0}(C_{\gamma}(i)) : i < \omega,\, C_{\gamma}(i) < \delta_{p_{0}}\rangle\] with $F(y) = g(\gamma)$. Let $R$ be the set of $q \leq p_{0}$ such that \[\langle q(C_{\gamma} (i)) : i < \omega,\, C_{\gamma}(i) < \delta_{q}\rangle\] is an initial segment of $y$.

We claim that for each dense subset $D$ of $Q_{\bar{C}, F, g}$ in $X$ and each $p \in R \cap X$, there is a $q \leq p$ in $R \cap D \cap X$. To see that the claim holds, fix $D$ and $p$ and let $Y \in X$ be a countable elementary substructure of $H((\beth_{2})^{+})$ such that $D,p \in Y$.
Extend $p$ to a condition $p' \in R \cap Y$ with $C_{\gamma} \cap Y \subseteq \delta_{p'}$. Then let $q \leq p'$ be an element of $Y \cap D$. It follows from the claim that there exists a condition below $p_{0}$ which is in each dense open subset of $Q_{\bar{C}, F, g}$ in $X$.
\end{proof}

\begin{remark}\label{aleph1Borelrem} In the forcing extension we produce, each subset of $\breals$ which is either universally measurable or universally categorical
will be a union of $\aleph_{1}$ many Borel sets.
Lemma \ref{codingitlem} below will be used to show that in addition, in this extension, every subset of $\cP(\omega)$ of cardinality $\aleph_{1}$ is $\uTSigma^{1}_{2}$. Analytic subsets of $\breals$ are naturally coded by elements of $\cP(\omega)$ (coding trees; see for instance Section 27A of \cite{Ke}) in such a way that the set of pairs $(x,y)$ such that $x \subseteq \omega$, $y \in \breals$ and $x$ is in the set coded by $y$ is analytic. From this one gets that, in our extension, all subsets of $\breals$ which are either universally measurable or universally categorical
are $\uTSigma^{1}_{2}$. Since the set of universally measurable sets and the set of universally categorical sets are both closed under complements, it follows that, in this extension, each such set is $\uTDelta^{1}_{2}$.
\end{remark}

Our coding of elements of $[\cP(\omega)]^{\aleph_{1}}$ uses certain iterations of length $\omega$ of partial orders of the form
$Q_{\bar{C}, F, g}$, which we now define. Let $\pi \colon \omega \times \omega \to \omega$ be a fixed recursive bijection, and let $\pi_{0}$ and $\pi_{1}$ be functions from $\omega$ to $\omega$ such that $\pi(i) = (\pi_{0}(i), \pi_{1}(i))$ for  all $i <  \omega$. We choose $\pi$ so that $\pi_{0}(i) < i$ for all $i > 0$. We define, for each triple $(\bar{C}, F, g)$ such that
\begin{itemize}
\item $\bar{C}$ is a ladder system on $\omega_{1}$,
\item $F$ is a dense function from $2^{\omega}$ to $2$ and
\item $g$ is a function from $\Omega$ to $2$
\end{itemize}
the following objects recursively on $i <\omega$ (and suppress discussion of the meaning of
``canonical name", trusting the reader to supply her or his preferred definition). Let
\begin{itemize}
\item $P_{0}$ be the trivial partial order;
\item $\dot{g}_{0}$ be the canonical $P_{0}$-name for $g$;
\item $\dot{Q}_{0}$ be the canonical $P_{0}$-name for $Q_{\bar{C}, F, g}$;
\item for all $i < \omega$,
\begin{itemize}
\item $P_{i +1}$ be $P_{i} * \dot{Q}_{i}$;
\item $\dot{h}_{i}$ be the canonical $P_{i+1}$-name for the $\dot{Q}_{i}$-generic function from $\omega_{1}$ to $2$;
\end{itemize}
\item for all positive $i <\omega$,
\begin{itemize}
\item  $\dot{g}_{i}$ be the canonical $P_{i}$-name for the set of pairs $(\alpha, k)$ such that $\alpha \in \Omega$ and $k = \dot{h}_{\pi_{0}(i), G_{i}}(\alpha + \pi_{1}(i))$, where $G_{i}$ denotes the generic filter for $P_{i}$;
\item $\dot{Q}_{i}$ be the canonical $P_{i}$-name for $Q_{\bar{C}, F, \dot{g}_{i}}$.
\end{itemize}
\end{itemize}
We then let $Q^{*}_{\bar{C}, F, g}$ denote the full (i.e., countable) support limit of the forcing iteration $\langle P_{i}, \dot{Q}_{i} : i < \omega \rangle$.

The purpose of this definition is given in Lemma \ref{codingitlem} below, but the key point is that, for any $\alpha \in \Omega$, the values $g_{i}(\alpha)$ ($i < \omega$) determined via $\pi$ the values $h_{i}(\alpha + n)$ ($i,n \in \omega$), and similarly, for any $\beta \in \Omega$ and the values of $h_{i}(\alpha)$ ($i \in \omega, \alpha < \beta$) determine the values $g_{i}(\beta)$ ($i < \omega$) via $F$. So relative to the ground model objects $F$ and $\pi$, the sequence $\langle h_{i} \restrict \omega : i < \omega \rangle$ determines the sequence $\langle (g_{i}, h_{i}) : i < \omega \rangle$.


Given $a \subseteq \omega$, we define the \emph{canonical ladder system relative to} $a$ to be the set $\bar{C}_{a} = \{ C^{a}_{\alpha} : \alpha < \Omega \cap \omega_{1}^{\bL[a]}\},$ where each $C^{a}_{\alpha}$ is the constructibly least (in $\bL[a]$, relative to $a$) cofinal subset of the corresponding $\alpha$ of ordertype $\omega$. This defines a ladder system in $\bL[a]$ which is a ladder system in $\bV$ if and only if $\omega_{1}^{\bL[a]} = \omega_{1}$. Let $\Omega'$ be the set of countable limits of limit ordinals.

\begin{lem}\label{codingitlem} Let $a$ be a subset of $\omega$ such that $\omega_{1}^{\bL[a]} = \omega_{1}$, and
let
$F \colon 2^{\omega} \to 2$ be a dense function whose graph is $\uTSigma^{1}_{2}$ in $\bL[a]$. Let $B$ be a subset of $\cP(\omega)$ of cardinality at most $\aleph_{1}$. There exists a function $g \colon \Omega \to 2$ such that, whenever
$G$ is a $V$-generic filter for $Q^{*}_{\bar{C}_{a}, F, g}$ and $M$ is an outer model of $\bV[G]$, then $B$ is
$\uTSigma^{1}_{2}$ in $M$.
\end{lem}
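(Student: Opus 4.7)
The plan is to encode an enumeration of $A$ into $g$ at the ordinals of $\Omega'$, then exploit the self-referential $\omega$-step iteration so that a single real (the concatenation of $\omega$-initial segments of the $h_{i}$) determines every countable restriction $h_{i}\restriction\beta$, making the decoding $\uTSigma^{1}_{2}$. First, in $V$ I would fix an enumeration $\{x_{\eta}:\eta\in\Omega'\cap\omega_{1}\}$ of $A$ (with repetitions, permitted by $|A|\le\aleph_{1}$) and define $g\colon\Omega\to 2$ by $g(\eta+\omega\cdot n)=x_{\eta}(n)$ for $\eta\in\Omega'$ and $n<\omega$, setting $g(\omega\cdot n)=0$ for $1\le n<\omega$; every $\eta\in\Omega$ has a unique decomposition $\eta=\eta'+\omega\cdot n$ with $\eta'\in\Omega'\cup\{0\}$ and $n<\omega$, so $g$ is well-defined. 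Letting $G$ be $V$-generic for $Q^{*}_{\bar{C}_{a},F,g}$ with generics $\langle h_{i}:i<\omega\rangle$, the fact that each stage is $(\omega,\infty)$-distributive by Lemma~\ref{properlem}, combined with the standard preservation of this property under countable-support iterations of proper forcings, gives that $Q^{*}$ adds no new reals; hence $h_{i}\restriction\omega\in V$ for each $i<\omega$, and the sequence $r:=\langle h_{i}\restriction\omega:i<\omega\rangle$ codes a single real in $V$. This $r$ will be the key parameter.

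The $\uTSigma^{1}_{2}$ definition of $A$ in $M$, with parameters $a$ and $r$, is the formula $\phi(x)$ asserting the existence of $\alpha\in\Omega'\cap\omega_{1}^{\bL[a]}$ and $\sigma=\langle\sigma_{i}:i<\omega\rangle$ with each $\sigma_{i}\colon\alpha+\omega^{2}\to 2$, jointly coded as a single real, such that: (i) $\sigma_{i}\restriction\omega=r_{i}$ for every $i$; (ii) for every $i>0$ and every $\eta'\in\Omega$ with $\eta'+\pi_{1}(i)<\alpha+\omega^{2}$,
\[ F\bigl(\langle\sigma_{i}(C^{a}_{\eta'}(j)):j<\omega\rangle\bigr)=\sigma_{\pi_{0}(i)}\bigl(\eta'+\pi_{1}(i)\bigr); \]
and (iii) for every $n<\omega$, $F(\langle\sigma_{0}(C^{a}_{\alpha+\omega\cdot n}(j)):j<\omega\rangle)=x(n)$. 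Since $F$'s graph is $\uTSigma^{1}_{2}$ in $\bL[a]$, the canonical ladder $\bar{C}_{a}$ is $\uTDelta^{1}_{2}$ in $a$, and being a countable-in-$\bL[a]$ ordinal is $\uTSigma^{1}_{2}$ in $a$, and since all the universal quantifiers in (ii) and (iii) range over countable sets, $\phi(x)$ is $\uTSigma^{1}_{2}$ in $a,r$: the outer $\exists\alpha,\exists\sigma$ bundle into one real existential, and the $\forall$s over $\eta',i,n$ pass through $\Sigma^{1}_{2}$ by Skolemization.

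The forward direction ($x\in A\Rightarrow\phi(x)$) is witnessed by $\sigma_{i}:=h_{i}\restriction(\alpha+\omega^{2})$ for any $\alpha$ with $x_{\alpha}=x$: these restrictions lie in $V\subseteq M$ by distributivity, (i) holds by the definition of $r$, and (ii), (iii) are exactly the forcing constraints imposed on $h_{i}$ by the iteration. The main obstacle is the backward direction: I need to show that any $\sigma$ witnessing $\phi(x)$ in $M$ necessarily agrees with $\langle h_{i}\restriction(\alpha+\omega^{2})\rangle$. I would prove by transfinite induction on $\beta<\alpha+\omega^{2}$ that $\sigma_{i}(\beta)=h_{i}(\beta)$ for every $i<\omega$. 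The base $\beta<\omega$ is (i). For $\beta\ge\omega$, decompose uniquely $\beta=\eta'+k$ with $\eta'\in\Omega$ and $k<\omega$, pick the $i'$ with $\pi(i')=(i,k)$ (which exists because $\pi$ is a bijection with $\pi_{0}(i')<i'$), and apply (ii): $\sigma_{i}(\beta)=F(\langle\sigma_{i'}(C^{a}_{\eta'}(j)):j<\omega\rangle)$. The induction hypothesis (applicable because $C^{a}_{\eta'}(j)<\eta'\le\beta$) replaces $\sigma_{i'}$ by $h_{i'}$ on the right-hand side, and the iteration's own constraint identifies the resulting $F$-value with $h_{\pi_{0}(i')}(\eta'+\pi_{1}(i'))=h_{i}(\beta)$. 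Therefore $\sigma_{0}=h_{0}\restriction(\alpha+\omega^{2})$, and (iii) becomes $x(n)=g(\alpha+\omega n)=x_{\alpha}(n)$, so $x=x_{\alpha}\in A$.
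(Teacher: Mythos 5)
Your proposal follows the same overall design as the paper's proof: encode a surjective enumeration of $A$ into $g$ on $\Omega'$, observe that the parameter $r = \langle h_i\restriction\omega : i<\omega\rangle$ is a real in an intermediate model, and then assert the existence of a family $\sigma=\langle\sigma_i\rangle$ extending $r$ and satisfying the self‑referential constraints arising from the definition of $\dot g_i$; a recursion on ordinals then identifies $\sigma_i$ with $h_i\restrict(\alpha+\omega^2)$ and recovers the real coded by $g$ at $\alpha$. So the route is the same, but there is one substantive divergence worth flagging. The paper does \emph{not} apply $F$ directly. It instead existentially quantifies a real $y$ coding a countable model $\bL_\alpha[a]$ and replaces $F$ by $F^*$, the evaluation of $F$'s $\uTSigma^1_2$ definition \emph{inside} $\bL_\alpha[a]$, and the membership of each $\langle h^*_i(C^a_\gamma(j)):j<\omega\rangle$ in $\dom(F^*)$ is itself one of the conditions. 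The effect is that every constraint becomes an arithmetic assertion about the coded structure $y$, so the overall complexity is $\uTSigma^1_2$ and its truth value is manifestly absolute to every outer model $M$. In your version the $\forall i\,\forall\eta'$ followed by a $\uTSigma^1_2$ fact about $F$ needs a genuine Skolemization argument: you must bundle the $\Sigma^1_2$ witnesses into a single real Skolem function, and you need a witness enumerating $\Omega\cap(\alpha+\omega^2)$ to make that function a real. Furthermore, to run the backward induction in $M$ with $F$ in place of $F^*$ you have to invoke Shoenfield/$\Sigma^1_2$‑absoluteness for parameters of $V[G]$ at each step, to ensure that $F^M$ evaluated at the sequences $\langle h_{i'}(C^a_{\eta'}(j)):j<\omega\rangle$ returns exactly the values that the iteration constraint prescribes. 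Neither point is fatal, but both are hidden behind the phrase "pass through $\Sigma^1_2$ by Skolemization," and the paper's $F^*$‑in‑$\bL_\alpha[a]$ device disposes of both in a single stroke.

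A second small imprecision, which you should at least acknowledge: your transfinite induction picks $i'$ with $\pi(i')=(i,k)$ and applies (ii), but (ii) only applies when $i'>0$. There is exactly one pair $(i,k)$ with $\pi^{-1}(i,k)=0$, and for ordinals of the form $\eta'+k$ at that pair your clause (ii) gives no information; the parenthetical remark "(which exists because $\pi$ is a bijection with $\pi_0(i')<i'$)" conflates existence (which follows from bijectivity alone) with the condition $i'>0$ that your use of (ii) silently needs. The paper's own write‑up is equally terse on this point, so this is shared, not a new error, but a fully written argument needs a sentence explaining how (or why it suffices) to handle the single exceptional pair $\pi(0)$.
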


\begin{proof}
It suffices to consider the case where $B$ is nonempty.
We fix (and suppress) a $\uTSigma^{1}_{2}$ definition for $F$, and note that, by Shoenfield absoluteness, in any outer model this definition will define a partial function from $\breals$ to $2$ containing $F$.

Let $g \colon \Omega \to 2$ be such that \[B = \{ \{ k \in \omega : g(\beta + \omega \cdot k) = 0\} : \beta \in \Omega' \},\]
let $G$ be $\bV$-generic for $Q^{*}_{\bar{C}_{a}, F, g}$ and let $M$ be an outer model of $\bV[G]$. For each $i < \omega$, let $h_{i}$ be $\dot{h}_{i, G \restrict P_{i+1}}$, where $\dot{h}_{i}$ is as in the definition of $Q^{*}_{\bar{C}, F, g}$. We show that, in $M$, $B$ is $\Sigma^{1}_{2}$ in $a$ and $\langle h_{i} \restrict \omega : i < \omega \rangle$.
  In particular, $B$ is the set of $x \subseteq \omega$ such that there exist
  \begin{itemize}
  \item an element $y$ of $\breals$ coding a model of the form $\bL_{\alpha}[a]$, for some countable ordinal $\alpha$ (the wellfoundedness of this model being a $\Pi^{1}_{1}$ condition on $y$), such that $\omega_{1}^{\bL_{\alpha}[a]}$ exists (i.e., some element of $\alpha$ is uncountable in $\bL_{\alpha}[a]$),
  \item $\beta \in \Omega' \cap \omega_{1}^{\bL_{\alpha}[a]}$,
  \item functions $h^{*}_{i} \colon \beta + \omega \cdot \omega \to 2$ ($i < \omega$) such that, for each $i < \omega$, $h^{*}_{i} \restrict \omega =
  h_{i} \restrict \omega$ and
  \item functions $g^{*}_{i} \colon \Omega \cap (\beta + \omega \cdot \omega) \to 2$ ($i < \omega$)
  \end{itemize}
such that,
letting $F^{*}$ be the function computed in $\bL_{\alpha}[a]$ using our $\uTSigma^{1}_{2}$ definition for $F$,
\begin{enumerate}
\item\label{liftitem}  for each $i< \omega$ and each $\gamma \in \Omega \cap (\beta + \omega \cdot \omega)$,
\begin{itemize}
\item $\langle h^{*}_{i}(C^{a}_{\gamma}(j)) : j < \omega \rangle$ is in the domain of $F^{*}$;
\item $g^{*}_{i}(\gamma) = F^{*}(\langle h^{*}_{i}(C^{a}_{\gamma}(j)) : j < \omega \rangle)$;
\item if $i > 0$ then $g^{*}_{i}(\gamma) = h^{*}_{\pi_{0}(i)}(\gamma + \pi_{1}(i))$;
\end{itemize}
\item $x = \{ k \in \omega : g^{*}_{0}(\beta + \omega \cdot k) = 0\}$.

\end{enumerate}
That there exist such objects for each element of $B$ follows from the choice of $g$ and the definition of $Q^{*}_{\bar{C}_{a}, F, g}$. Given such objects, the choice of the names $\dot{g}_{i}$ (and item (\ref{liftitem}) above) then implies (via an inductive proof on
$\gamma \in \Omega \cap (\beta + \omega\cdot\omega)$) that for each $i < \omega$, $g^{*}_{i}  = \dot{g}_{i, G \restrict P_{i}}\restrict (\beta + \omega \cdot \omega)$ and
$h^{*}_{i} = h_{i} \restrict (\beta + \omega \cdot \omega)$, which, again by the choice of $g$, implies that the corresponding set $x$ is in $B$.
\end{proof}



\section{Sequences and trees}\label{seqtreesec}

Remarks \ref{reducerem} and \ref{popropsrem} and Lemma \ref{codingitlem} reduce the proof of Theorem \ref{specthrm} to showing that the forcing
iterations we consider are $\cA$-representing, where $\cA$ is either the collection of universally measurable sets or the collection of universally categorical sets. These iterations will be countable support iterations where each successor step is a partial order of the form $Q_{\bar{C}, F, \dot{g}}$, for a fixed ladder system $\bar{C}$ on $\omega_{1}$ and a fixed dense function $F$ (moreover, our iterations will consist of segments of the form $Q^{*}_{\bar{C}, F, \dot{g}}$). An additional requirement on $F$ (total pathology) will be introduced in Section \ref{pathsec}. We fix the following notation.


\begin{df}\label{QCFdef} Let $\bar{C}$ be a ladder system on $\omega_{1}$
and let $F \colon 2^{\omega} \to 2$ be a dense partial function.
Let $\bQ_{\bar{C}, F}$ be the class of sequences of the form
\[ \langle \bbP_{\alpha}, \dot{Q}_{\beta}, \dot{g}_{\beta}, \dot{h}_{\beta} : \alpha \leq \alpha_{*}, \beta < \alpha_{*} \rangle,\] where
\begin{itemize}
\item $\alpha_{*}$ is an ordinal,
\item $\langle \bbP_{\alpha}, \dot{Q}_{\beta} : \alpha \leq \alpha_{*}, \beta < \alpha_{*} \rangle$ is a countable support iteration,
\item each $\dot{g}_{\beta}$ is a $P_{\beta}$-name for a function from $\Omega$ to $2$,
\item each $\dot{Q}_{\beta}$ is a $P_{\beta}$-name for the partial order $Q_{\bar{C}, F, \dot{g}_{\beta}}$,
\item each $\dot{h}_{\beta}$ is a $P_{\beta + 1}$-name for the $\dot{Q}_{\beta}$-generic function.
\end{itemize}
We say that $\langle \bbP_{\alpha}, \dot{Q}_{\beta}, \dot{g}_{\beta}, \dot{h}_{\beta} : \alpha \leq \alpha_{*}, \beta < \alpha_{*} \rangle \in \bQ_{\bar{C}, F}$ is \emph{fully bookkeeping} if for each $\alpha < \alpha_{*}$ and each $\bbP_{\alpha}$-name $\dot{B}$ for a nonempty set of reals of cardinality at most $\aleph_{1}$, there is a $\alpha' \in [\alpha, \alpha_{*})$ such that $1_{\bbP_{\alpha'}}$ forces that the iteration induced by $\langle \dot{Q}_{\alpha' + n} : n \in \omega \rangle$ will be a forcing of the form $Q^{*}_{\bar{C}, F, g}$ (as computed in the $\bbP_{\alpha'}$-extension), with $g$ chosen as in Lemma \ref{codingitlem} so that $Q^{*}_{\bar{C}, F, g}$ forces $B$ to be $\uTSigma^{1}_{2}$ in its extension and in the $\bbP_{\alpha_{*}}$ extension.
\end{df}




In Section \ref{itsec} we will be building a suitable $X$-generic condition for some $\bbP_{\alpha_{*}}$ as above, where $X$ is a countable elementary submodel of a large enough initial segment of the universe containing our iteration. The first definition below lists useful data that comes with such a situation. The second definition
describes a tree of conditions which will guide us to finding our desired condition.


\begin{df}\label{sdsdef}
  A \emph{suitable data sequence} is sequence
  \[\langle \bar{C}, F, \bq, p, X, \bar{Y}, \bar{\i}, \bar{D} \rangle\]
  such that
\begin{itemize}
\item $\bar{C} = \langle C_{\alpha} : \alpha \in \Omega \rangle$ is a ladder system on $\omega_{1}$;
\item $F \colon 2^{\omega} \to 2$ is a dense function;
\item $\bq = \langle \bbP_{\alpha}, \dot{Q}_{\beta}, \dot{g}_{\beta}, \dot{h}_{\beta} : \alpha \leq \alpha_{*}, \beta < \alpha_{*} \rangle$ is in $\bQ_{\bar{C}, F}$;
\item $p$ is in $\bbP_{\alpha_{*}}$;
\item $X \prec (H((2^{2^{|\bq|}})^{+}), \in)$ is countable, with $\{ \bar{C}, F, \bq, p\} \in X$;
\item $\bar{Y} = \langle Y_{k} : k \in \omega \rangle$ is an $\in$-chain of countable elementary substructures of
$H((2^{|\bq|})^{+})$, such that
\begin{itemize}
\item $\{ \bar{C}, F, \bq, p\} \in Y_{0}$,
\item $X \cap H((2^{|\bq|})^{+}) = \bigcup_{k \in \omega}Y_{k}$,
\item for all $k \in \omega$, $C_{X \cap \omega_{1}} \cap (Y_{k+1} \setminus Y_{k}) \neq \emptyset$;
\end{itemize}
\item $\bar{\i} = \langle i_{k} : k \in \omega \rangle$ lists $\ot(X \cap \alpha_{*})$ without repetition in such a way that for each $k \in \omega$ the $i_{k}$th element of $X \cap \alpha_{*}$ is in $Y_{k}$;
\item $\bar{D} = \langle D_{k} : k \in \omega \rangle$ lists the dense open subsets of $\bbP_{\alpha_{*}}$ in $X$ in such a way that each $D_{k}$ is in the corresponding $Y_{k}$.
\end{itemize}
\end{df}

In the following definition, note that by the conditions on $\bar{\i}$ in Definition \ref{sdsdef}, $\alpha_{i_{k}}$ is in $Y_{k}$, for each $k \in \omega$.

\begin{df}\label{condtreedef}
  Suppose that
  \[S = \langle \bar{C}, F, \bq, p, X, \langle Y_{k} : k \in \omega \rangle, \langle i_{k} : k \in \omega \rangle, \langle D_{k} : k \in \omega \rangle \rangle\]
  is a suitable data sequence, and let
    \begin{itemize}
  \item $\bq$ be $\langle \bbP_{\alpha}, \dot{Q}_{\beta}, \dot{g}_{\beta}, \dot{h}_{\beta} : \alpha \leq \alpha_{*}, \beta < \alpha_{*} \rangle$,
  \item $\gamma$ be $X \cap \omega_{1}$,
  \item $\bar{\alpha} = \langle \alpha_{i} : i < i_{*} \rangle$ list $X \cap \alpha_{*}$ in increasing order,
  \item $\bar{C}$ be $\langle C_{\alpha} : \alpha \in \Omega\rangle$,
  \item for each $k \in \omega$, $m_{k}$ be $|C_{\gamma} \cap Y_{k}|$ and $u_{k}$ be $\{ i_{n} : n < k \}$,
  \item $\alpha_{i_{*}}$ denote $\alpha_{*}$ and, for each $j \leq i_{*}$ and $k \in \omega$, $u_{k}(j)$ be $u_{k} \cap j$.
  \end{itemize}
A \emph{condition tree} for $S$ is a pair $(T, \langle p_{\rho} : \rho \in T \rangle)$
  such that

\begin{enumerate}
\item\label{itemone} $T$ is a finitely branching tree of finite sequences, with a unique node of length $1$;
\item\label{itemtwo} each $\rho \in T$ is sequence of the form $\langle s^{\rho}_{k} : k < |\rho| \rangle$
such that
\begin{enumerate}
\item each $s^{\rho}_{k}$ is a function from $u_{k}$ to $2^{m_{k}}$;
\item\label{itemtwob} whenever $k + 1 < |\rho|$ and $i \in u_{k}$, $s^{\rho}_{k}(i) \trianglelefteq s^{\rho}_{k+1}(i)$;
\end{enumerate}
\item\label{itemthree} whenever $\rho \in T$ has length $k+1$ and $s \colon u_{k} \to 2^{m_{k+1}}$ is such that,
for all $i \in u_{k}$,
$s^{\rho}_{k}(i) \trianglelefteq s(i)$,
there is exactly one $\rho' \in T$ such that:
\begin{itemize}
\item $\rho \trianglelefteq \rho'$,
\item $|\rho'| = k+2$,
\item $s \subseteq s^{\rho'}_{k + 1}$;
\end{itemize}
\item\label{itemthreepfive} whenever $\rho_{1}, \rho_{2} \in T$ have length at least $k + 2$ and $j < i_{*}$ is such that
\[s^{\rho_{1}}_{k+1} \restrict u_{k}(j) = s^{\rho_{2}}_{k+1} \restrict u_{k}(j),\]
then we have
\[s^{\rho_{1}}_{k+1} \restrict u_{k+1}(j) = s^{\rho_{2}}_{k+1} \restrict u_{k+1}(j);\]
\item\label{belp} $p_{\langle\rangle}  = p$;
\item\label{gooddec} for all $\rho \in T\setminus \{\langle \rangle\}$, $p_{\rho} \in D_{|\rho|-1} \cap Y_{|\rho|-1}$;

\item if $\rho_{1} \trianglelefteq \rho_{2} \in T$ then $p_{\rho_{1}} \geq_{\bbP_{\alpha_{*}}} p_{\rho_{2}}$;

\item\label{itemseven} if $\rho \in T$ and $|\rho| = k + 1$ then for all $i \in u_{k}$,
\[p_{\rho}\restrict (\alpha_{i}+1)\Vdash_{\bbP_{\alpha_{i}+1}} \langle \dot{h}_{\alpha_{i}}(C_{\gamma}(n)) : n < m_{k}
\rangle =  \check{s}^{\rho}_{k}(i)\]
and $p_{\rho}\restrict \alpha_{i}$ decides the value of $\delta_{p_{\rho}(\alpha_{i})}$;

\item\label{itemeight} if $\alpha \leq \alpha_{*}$ and $\rho_{1}, \rho_{2} \in T$ have the same length and are such that
\[s^{\rho_{1}}_{k}(i) = s^{\rho_{2}}_{k}(i)\] whenever $k < |\rho_{1}|$, $i \in u_{k}$ and $\alpha_{i} <  \alpha$,
then $p_{\rho_{1}} \restrict \alpha = p_{\rho_{2}} \restrict \alpha$.


\end{enumerate}
\end{df}



\begin{remark}
Using parts (\ref{itemone}) and (\ref{itemseven}) of Definition \ref{condtreedef}, the conjunction of parts (\ref{itemthreepfive}) and (\ref{itemeight}) can be rephrased as the following statement : whenever $\rho_{1}$ and $\rho_{2}$ are distinct members of $T$ of the same length $k + 2$, letting $i$ be the least member of $u_{k+1}$ such that $s^{\rho_{1}}_{k+1}(i) \neq s^{\rho_{2}}_{k+1}(i)$, we have that $i \in u_{k}$ and $\alpha_{i}$ is the least $\alpha$ such that $p_{\rho_{1}} \restrict (\alpha + 1) \neq p_{\rho_{2}} \restrict (\alpha + 1)$.
\end{remark}

The following lemma is an adaptation of the proof of Lemma 1.8 of \cite{Sh64}.

\begin{lem}\label{fusionlemma}
If $S$ is a suitable data sequence, then there exists a condition tree relative to $S$.
\end{lem}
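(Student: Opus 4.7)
The plan is to build $T$ and the associated conditions $\{p_{\rho}\}$ by recursion on $|\rho|$, using a fusion-style argument in the spirit of Lemma~1.8 of \cite{Sh64}. We set $p_{\langle\rangle}=p$; since $u_{0}=\emptyset$, the unique level-$1$ node is $\rho_{1}=\langle\emptyset\rangle$, and elementarity of $Y_{0}$ (which contains $p$ and $D_{0}$) gives $p_{\rho_{1}}\in D_{0}\cap Y_{0}$ with $p_{\rho_{1}}\le p$, taking care of clause (\ref{belp}) and the base case of (\ref{gooddec}).

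For the inductive step, suppose $T$ has been defined through level $k+1$ and the inductive clauses of Definition~\ref{condtreedef} hold. Fix $\rho\in T$ with $|\rho|=k+1$ together with an extension $s\colon u_{k}\to 2^{m_{k+1}}$ of $s^{\rho}_{k}$, i.e.\ $s^{\rho}_{k}(i)\triangleleft s(i)$ for all $i\in u_{k}$. We produce the unique successor $\rho'$ demanded by clause (\ref{itemthree}) and its condition $p_{\rho'}$ as follows. Enumerate $u_{k+1}=\{j_{0}<\cdots<j_{r-1}\}$ so that $\alpha_{j_{0}}<\cdots<\alpha_{j_{r-1}}$, and build a decreasing chain $p_{\rho}=q_{-1}\ge q_{0}\ge\cdots\ge q_{r-1}\ge q_{r}=p_{\rho'}$ in $\bbP_{\alpha_{*}}$. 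At stage $\ell<r$ we refine only the coordinates $\ge\alpha_{j_{\ell}}$ of $q_{\ell-1}$, leaving $q_{\ell-1}\restrict \alpha_{j_{\ell}}$ intact and replacing the tail by an extension forcing $\langle\dot{h}_{\alpha_{j_{\ell}}}(C_{\gamma}(n)):n<m_{k+1}\rangle$ to equal $\check{s}(j_{\ell})$ when $j_{\ell}\in u_{k}$, or to equal a freshly chosen sequence in $2^{m_{k+1}}$ when $j_{\ell}=i_{k}\in u_{k+1}\setminus u_{k}$ (in the latter case this choice is set to be $s^{\rho'}_{k+1}(i_{k})$). Such a tail extension exists because in the step forcing $\dot{Q}_{\alpha_{j_{\ell}}}=Q_{\bar{C},F,\dot{g}_{\alpha_{j_{\ell}}}}$ one can always prescribe the values of the generic function at finitely many ordinals below $\gamma$, satisfying any $F$-constraint imposed at the crossing of a new limit ordinal in the domain by invoking the density of $F$ (Definition~\ref{denfundef}). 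At stage $r$ we take a further extension of $q_{r-1}$ in $Y_{k+1}\cap D_{k+1}$, modifying only coordinates beyond $\alpha_{j_{r-1}}$; this is available by elementarity of $Y_{k+1}$ since $q_{r-1}$ and $D_{k+1}$ are in $Y_{k+1}$. The resulting $p_{\rho'}$ lies in $D_{k+1}\cap Y_{k+1}$ and clause (\ref{itemseven}) is immediate from the stagewise choices.

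Clause (\ref{itemeight}) holds because at stage $\ell$ we modify only coordinates $\ge\alpha_{j_{\ell}}$, and the modification depends only on $s$-values at indices $\le j_{\ell}$ (together with inductively fixed data from earlier levels, which likewise depends only on $s$-values at indices $\le j_{\ell}$). Consequently, if two siblings $\rho_{1},\rho_{2}\in T$ agree on $s$-values at every $i\in u_{\ell}\cap(\alpha_{j}+1)$ for all $\ell<|\rho_{1}|$, then $p_{\rho_{1}}\restrict\alpha_{j}=p_{\rho_{2}}\restrict\alpha_{j}$, as required.

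The main technical obstacle is arranging simultaneously that (i)~$p_{\rho'}$ forces the prescribed ladder values, (ii)~$p_{\rho'}\in D_{k+1}\cap Y_{k+1}$, and (iii)~sibling compatibility (\ref{itemeight}) is preserved. Coordinate-wise fusion — processing coordinates in increasing order and modifying only tails at each stage — decouples (iii) from (i) and (ii); the move from $Y_{k}$ to $Y_{k+1}$ between consecutive levels, made possible by the condition $C_{X\cap\omega_{1}}\cap(Y_{k+1}\setminus Y_{k})\neq\emptyset$ of the suitable data sequence, provides the room needed to accommodate the freshly activated coordinate $i_{k}$ and to reach the next dense open set $D_{k+1}$.
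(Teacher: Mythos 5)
Your per-node, coordinate-by-coordinate construction handles the ladder-value constraints and (as you note) keeps clause (\ref{itemeight}) intact through the stages $\ell < r$, but the last step is where it breaks. You claim that at stage $r$ you can push $q_{r-1}$ into $Y_{k+1}\cap D_{k+1}$ while ``modifying only coordinates beyond $\alpha_{j_{r-1}}$,'' citing elementarity. Elementarity of $Y_{k+1}$ only gives you \emph{some} extension $q'\le q_{r-1}$ with $q'\in D_{k+1}\cap Y_{k+1}$; it gives no control over which coordinates of the iteration $q'$ changes. A dense open subset of $\bbP_{\alpha_*}$ can perfectly well force a strengthening at coordinate $\alpha_{j_0}$, or at coordinate $0$, and there is no reason a tail-only extension should ever reach it. Once you allow the $D_{k+1}$-step to touch low coordinates, the per-node construction no longer guarantees that siblings $(\rho,s_1)$ and $(\rho,s_2)$ whose $s$-values agree up to $\alpha_j$ produce conditions agreeing on $\bbP_{\alpha_j}$, and clause (\ref{itemeight}) can fail.

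The paper's proof avoids this by never processing a single pair $(\rho,s)$ in isolation when meeting $D_k$. It first builds the entire family $\langle p^{0}_{\rho,s} : (\rho,s)\in S\rangle$ with the ladder decisions decided and the coherence condition (\ref{coherencec}) holding, and then runs $|S|$ rounds of a simultaneous fixup: at each round one condition is strengthened into $D_k$, and all the \emph{other} members of the family have their initial segments overwritten to agree with it wherever (\ref{coherencec}) demands. Because the prior round's initial segments already agreed, replacing them with the strengthened segment is itself a strengthening, so the family remains a family of conditions; and because $D_k$ is open, conditions already placed in $D_k$ remain there even after their initial segments are overwritten. After $|S|$ rounds the whole family sits in $D_k$ while still satisfying (\ref{coherencec}). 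That simultaneous, iterative bookkeeping over the whole level is the ingredient your argument is missing; without it there is no way to enter $D_{k+1}$ and preserve sibling coherence at the same time. (A secondary point: the paper's set $B_k$ also records that $q\restrict\alpha_i$ forces $\delta_{q(\alpha_i)}\in Y_k\cap\omega_1$; this is what justifies the claim that the next batch of ladder ordinals $C_\gamma(n)$, $m_k\le n<m_{k+1}$, lies above the current domains, so the fresh ladder values can actually be prescribed. Your appeal to density of $F$ implicitly needs this domain control, and it is worth stating explicitly.)
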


%

\begin{proof} Let $S$ be
\[\langle \bar{C}, F, \bq, p, X, \langle Y_{k} : k \in \omega \rangle, \langle i_{n} : n < \omega \rangle, \langle D_{k} : k \in \omega \rangle \rangle,\]
and let
  \begin{itemize}
  \item $\gamma$ be $X \cap \omega_{1}$,
  \item $\bq$ be $\langle \bbP_{\alpha}, \dot{Q}_{\beta}, \dot{g}_{\beta}, \dot{h}_{\beta} : \alpha \leq \alpha_{*}, \beta < \alpha_{*} \rangle$,
  \item $\bar{\alpha} = \langle \alpha_{i} : i < i_{*} \rangle$ list $X \cap \alpha_{*}$ in increasing order,
  \item $\bar{C}$ be $\langle C_{\alpha} : \alpha \in \Omega\rangle$ and
  \item for each $k \in \omega$, $m_{k}$ be $|C_{\gamma} \cap Y_{k}|$ and $u_{k}$ be $\{ i_{n} : n < k\}$.
  \end{itemize}
 We build $T$ and $\langle p_{\rho} : \rho \in T\rangle$ by recursion on the length of $\rho \in T$.  For each $k \in \omega$, let $T(k)$ denote the set of sequences in $T$ of length $k$. For $k = 0$ we let $p_{\langle \rangle} = p$.



 For each $k \in \omega$, let $B_{k}$ be the (nonempty) set of conditions $q \leq p$ in $D_{k} \cap Y_{k}$ such that
 for each $i \in u_{k}$,
\begin{itemize}
\item  $q \restrict \alpha_{i}$ decides the value of $\delta_{q(\alpha_{i})}$ (necessarily to be in $Y_{k} \cap \omega_{1}$);
\item\label{uniqtwo} $q \restrict (\alpha_{i} + 1)$ decides the value of $\langle \dot{h}_{\alpha_{i}}(C_{\gamma}(n)) : n <  m_{k} \rangle$.
\end{itemize}
Given $q \in \bbP_{\alpha_{*}}$, $i < i_{*}$ and $k \in \omega$ such that $q$ decides the value
of the sequence $\langle \dot{h}_{\alpha_{i}}(C_{\gamma}(n)) : n <  m_{k} \rangle$, let $t^{q}_{i,k}$ denote this decided value.
Each $\rho \in T$ of length greater than zero will be determined by the corresponding $p_{\rho}$. That is, conditions (\ref{gooddec}) and (\ref{itemseven}) in Definition \ref{condtreedef} imply that each $p_{\rho}$ is in $B_{|\rho|-1}$ and each value $s^{\rho}_{\ell}(i)$ ($\ell < |\rho|$, $i \in u_{\ell}$) is the corresponding $t^{p_{\rho}}_{i,\ell}$.

To start, let $q_{1}$ be any member of $B_{0}$.
Let $\rho_{1}$ be the sequence of length $1$ whose only member is the emptyset (noting that $u_{0}$ is the emptyset).
Let $T(1) = \{ \rho_{1}\}$ and let $p_{\rho_{1}}$ be $q_{1}$.

Suppose now that $k \in \omega \setminus \{0\}$ and that the members of $T(k)$ and the conditions $p_{\rho} \in B_{k-1}$ $(\rho \in T(k))$ have been chosen so as to satisfy the conditions in Definition \ref{condtreedef}.
Let $Z$ be the set of pairs $(\rho, s)$, where $\rho \in T(k)$ and  $s \colon u_{k-1} \to 2^{m_{k}}$ are  as in item (\ref{itemthree}) of Definition \ref{condtreedef}, that is, such that for all $i \in u_{k-1}$, $s^{\rho}_{k-1}(i) \trianglelefteq s(i)$.
Working in $Y_{k}$ we pick for each pair $(\rho, s) \in Z$ a condition $p_{\rho, s} \leq p_{\rho}$ in $B_{k}$ meeting conditions (\ref{itemseven}) and (\ref{itemeight}) of Definition \ref{condtreedef} and such that the induced sequences $t^{p_{\rho, s}}_{i,k}$ ($i \in u_{k}$) will satisfy conditions (\ref{itemthree}) and (\ref{itemthreepfive}) of Definition \ref{condtreedef}. Then the set of conditions corresponding to the members of $T(k+1)$ will be $\{ p_{\rho, s} : (\rho, s) \in Z\}$, and this will induce the set $T(k+1)$.

We will do this in three stages. In the first stage we (eventually) choose conditions $p^{1,\max(u_{k}),|Z|}_{\rho, s} \leq p_{\rho}$ for each pair $(\rho,s) \in Z$, in such a way that the sequences $t^{p_{\rho,s}}_{i,k}$ are all determined (for each $(\rho, s)\in Z$ and $i\in u_{k}$, assuming, as will be the case, that the eventually chosen condition $p_{\rho, s}$ will be a strengthening of $p^{1,\max(u_{k}), |Z|}_{\rho, s}$).
In the second stage we strengthen each condition $p^{1,\max(u_{k}),|Z|}_{\rho, s}$ to a condition $p^{2,|Z|}_{\rho, s}$ in $D_{k}$. In the third stage each condition $p^{2,|Z|}_{\rho, s}$ is strengthened to a condition $p^{3,\min(u_{k}),|Z|}_{\rho, s} = p_{\rho, s}$ deciding the value of $\delta_{p_{\rho,s}}(\alpha_{i})$ for each $i \in u_{k}$. In each stage the choices are made simultaneously for all $(\rho, s) \in Z$ order to ensure that conditions (\ref{itemthreepfive}) and (\ref{itemeight}) of Definition \ref{condtreedef} are preserved.



In the first stage we choose conditions of the form $p^{1,i,\ell}_{\rho, s} \leq p_{\rho}$ $(\ell \leq |Z|, i \in u_{k})$,
recursively first in $u_{k}$ (simultaneously for all
$(\rho, s) \in Z$) and secondarily in $|Z|$ (although this is needed only the case $i = i_{k-1}$),
in such a way that
\begin{enumerate}
\item\label{rhoscondone} for all $(\rho, s) \in Z$,
\begin{enumerate}
\item $p^{1,\min(u_{k}),0}_{\rho, s} = p_{\rho}$;
\item for all $i \in u_{k}$ and $\ell < |Z|$, $p^{1,i,\ell+1}_{\rho, s} \leq p^{1,i,\ell}_{\rho,s}$;
\item\label{rhoscondonec} for all $i \in u_{k}$, $\ell \leq |Z|$ and $\alpha \in \alpha_{*} \setminus (\alpha_{i} + 1)$, $p^{1,i,\ell}_{\rho,s}(\alpha) = p_{\rho}(\alpha)$;
\item when $i < i'$ are successive members of $u_{k}$, $p^{1,i,|Z|}_{\rho, s} = p^{1,i',0}_{\rho, s}$;
\item\label{rhoscondoneb} for each $i \in u_{k-1}$, $p^{1,i,1}_{\rho, s} = p^{1,i,|Z|}_{\rho, s}$ and \[p^{1,i,1}_{\rho, s}\restrict (\alpha_{i}+1)\Vdash_{\bbP_{\alpha_{i}+1}}\langle \dot{h}_{\alpha_{i}}(C_{\gamma}(n)) : n < m_{k}\rangle = s(i);\]
\item\label{rhoscondoned}
$p^{1,i_{k-1},|Z|}_{\rho, s}\restrict (\alpha_{i_{k-1}}+1)$ decides the value of
\[\langle \dot{h}_{\alpha_{i_{k-1}}}(C_{\gamma}(n)) : n < m_{k}\rangle.\]
\end{enumerate}
\item\label{rhoscondtwo} for all $(\rho_{1}, s_{1}), (\rho_{2}, s_{2}) \in Z$, all $i \in u_{k}$ all $\ell \leq |Z|$ and all $\alpha \leq \alpha_{*}$, if
\[s_{1} \restrict \{ j \in u_{k-1} : \alpha_{j} < \alpha \} = s_{2} \restrict \{ j \in u_{k-1} : \alpha_{j} < \alpha \}\]
then $p^{1,i,\ell}_{\rho_{1}, s_{1}} \restrict \alpha  = p^{1,i,\ell}_{\rho_{2}, s_{2}} \restrict \alpha $.
\end{enumerate}
Note that item (\ref{rhoscondtwo}) above corresponds to condition (\ref{itemeight}) of Definition \ref{condtreedef}, which holds for the conditions $p_{\rho}$ for $\rho \in T(k)$ by the recursive hypothesis.

We choose the conditions $p^{1,i,\ell}_{\rho, s}$ ($(\rho, s) \in Z$, $1 \leq \ell \leq |Z|$) as follows. For each $(\rho, s) \in Z$, let $p^{1,i,0}_{\rho, s}$ be $p_{\rho}$ if $i = \min(u_{k})$. Otherwise let $p^{1,i,0}_{\rho, s}$ be $p^{1,i',|Z|}_{\rho, s}$, where $i'$ is the largest member of $u_{k}$ below $i$. Enumerate $|Z|$ as $\langle (p_{\ell}, s_{\ell}) : \ell  < |Z| \rangle$.

First consider the case $i \in u_{k-1}$. Define an equivalence relation $E_{i}$ on $Z$ consisting of the pairs $((\rho,s),(\rho',s'))$ such that
\[s \restrict \{ j \in u_{k-1} : \alpha_{j} \leq  \alpha_{i}\} = s'\restrict \{ j \in u_{k-1} : \alpha_{j} \leq  \alpha_{i}\}.\] By the recursive hypothesis applied to item (\ref{rhoscondtwo}), for any two $E_{i}$-equivalent pairs $(\rho, s)$ and $(\rho', s')$ in $Z$,
\[p^{1,i,0}_{\rho, s} \restrict (\alpha_{i} + 1) = p^{1,i,0}_{\rho',s'} \restrict (\alpha_{i}+1).\]
Each condition $p^{1,i,0}_{\rho,s}$ is below $p_{\rho}$, which forces that $\delta_{p_{\rho}(\alpha_{i})} < Y_{k-1} \cap \omega_{1}$. Furthermore, $Y_{k-1} \cap \omega_{1}$ is below the least element of $C_{\gamma}$ for which the value of $\dot{h}_{\alpha_{i}}$ is not decided by $p_{\rho}$.
By item (\ref{rhoscondonec}) (applied to the previous values of $i$, if there are any), $p^{1,i,0}_{\rho, s}(\alpha_{i}) = p_{\rho}(\alpha_{i})$ for all $(\rho, s) \in Z$. We can then strengthen each condition $\rho^{1,i,0}_{(\rho, s)}$ $((\rho, s) \in Z)$ to a condition $\rho^{1,i,1}_{(\rho, s)}$ while changing only coordinate $\alpha_{i}$
(in the same way for $E_{i}$-equivalent pairs) to make the displayed formula from item (\ref{rhoscondoneb}) hold.
As in item (\ref{rhoscondoneb}), for all $\ell \in (1,|Z|]$, we let $p^{1,i,\ell}_{\rho, s} = p^{1,i,1}_{\rho,s}$.

When $i \in u_{k} \setminus u_{k-1}$ (i.e., $i = i_{k-1}$), our task is easier in that there is are no previously specified values of $\langle \dot{h}_{\alpha_{i}}(C_{\gamma}(n)) : n < m_{k}\rangle$ to obtain. However, it is harder in that we may need to change our conditions in coordinates below $\alpha_{i}$ while preserving item (\ref{rhoscondtwo}) at lower values of $\alpha$ (conditions disagreeing on the interval $[0,\alpha_{i}]$ may still agree on some such interval $[0,\alpha)$). Our method for doing this will be used again in stages two and three.

For each $\ell < |Z|$, given the conditions $p^{1,i,\ell}_{\rho, s}$ we
strengthen condition $p^{1,i,\ell}_{\rho_{\ell}, s_{\ell}}$ (perhaps trivially, and only in coordinates less than or equal to $\alpha_{i}$) to decide the value of
$\langle \dot{h}_{\alpha_{i}}(C_{\gamma}(n)) : n < m_{k}\rangle$, thus forming condition $p^{1,i,\ell+1}_{\rho_{\ell}, s_{\ell}}$ (still in $Y_{k}$). For each other condition $p^{1,i,\ell}_{\rho, s}$ (i.e., with $(\rho, s) \in Z \setminus \{(\rho_{\ell}, s_{\ell})\}$), letting $\alpha$ be maximal with 
\[s \restrict \{j \in u_{k-1} : \alpha_{j} < \alpha\} = s_{\ell} \restrict \{ j \in u_{k-1} : \alpha_{j} < \alpha\},\]
(by item (\ref{rhoscondtwo}) this implies that $p^{1,i,\ell}_{\rho, s} \restrict \alpha = p^{1,i,\ell}_{\rho_{\ell}, s_{\ell}} \restrict \alpha$,)
we let $p^{1,i,\ell + 1}_{\rho,s}$ be the concatenation of $p^{1,i,\ell+1}_{\rho_{\ell},s_{\ell}} \restrict \alpha$ with $p^{1,i,\ell}_{\rho, s} \restrict [\alpha, \alpha_{*})$.


The choice of the conditions $p^{1,\max(u_{k}),|Z|}_{\rho, s}$ induces the corresponding members of $T(k+1)$: letting, for each $(\rho, s) \in Z$, $t_{\rho, s} \colon u_{k} \to 2^{m_{k}}$ be such that \[t_{\rho, s}(i) = t^{p^{1,\max(u_{k}),|Z|}_{\rho,s}}_{i,k},\]
we let $T(k+ 1)$ be \[\{ \rho^{\frown}\langle t_{\rho, s} \rangle : (\rho, s) \in Z\}.\]
The members of $T(k+1)$ have been chosen 
to satisfy conditions (\ref{itemthree}) and (\ref{itemthreepfive}) of Definition \ref{condtreedef}.

In stages 2 and 3 we will strengthen each $p^{1,\max(u_{k}),|Z|}_{\rho, s}$ to a condition $p_{\rho, s}$ by modifications of the argument above for item (\ref{rhoscondoned}). We will then let $p_{\rho, s}$ be 
$p_{\rho^{\frown}\langle t_{\rho, s} \rangle}$ for each $(\rho, s) \in Z$.
Condition (\ref{rhoscondoneb}) above implies that any strengthening of any $p^{1,\max(u_{k}), |Z|}_{\rho, s}$ will satisfy condition (\ref{itemseven}) of Definition \ref{condtreedef}, aside from the requirement that the values of $\delta_{p_{\rho, s}(\alpha_{i})}$ ($i \in u_{k}$) must be decided, which we will take care of in stage 3. 

The recursive hypotheses and item (\ref{rhoscondtwo}) above imply that the conditions $p^{1,\max(u_{k}),|Z|}_{\rho, s}$ do not violate condition (\ref{itemeight}) of Definition \ref{condtreedef}. In particular, we have that for all distinct pairs $(\rho, s), (\rho', s')$ from $Z$, the greatest $\alpha \leq \alpha_{*}$ for which $p^{1,\max(u_{k}), |Z|}_{\rho,s} \restrict \alpha = p^{1,\max(u_{k}),|Z|}_{\rho', s'} \restrict \alpha$ is also the least value $\alpha_{i}$ ($i \in u_{k-1}$) for which $s(i) \neq s'(i)$. We will maintain this equivalence throughout the rest of the construction for the various conditions associated to the pairs $(\rho, s)$ from $Z$.


We now (in stage 2) strengthen each $p^{1,\max(u_{k}),|Z|}_{\rho, s}$ to a condition $p^{2, |Z|}_{\rho, s}$ in $D_{k}$, while respecting condition (\ref{itemeight}) of Definition \ref{condtreedef}. We reuse our enumeration of $|Z|$ as $\langle (p_{\ell}, s_{\ell}) : \ell  < |Z| \rangle$.
We choose conditions $p^{2,\ell}_{\rho, s}$
for $(\rho, s) \in Z$ and $\ell \leq |Z|$, recursively in $\ell$ so that:
\begin{enumerate}
\setcounter{enumi}{2}
\item for all $(\rho, s) \in Z$, 
   \begin{enumerate}
      \item $p^{2,0}_{\rho, s} = p^{1,\max(u_{k}),|Z|}_{\rho, s}$;
      \item for all $\ell < |Z|$, $p^{2,\ell + 1}_{\rho, s} \leq p^{2,\ell}_{\rho, s}$ and $p^{2, \ell + 1}_{\rho_{\ell}, s_{\ell}} \in B_{k}$;
   \end{enumerate}
\item\label{coherencec} for all $\ell \leq |Z|$, and all distinct $(\rho, s), (\rho', s') \in Z$, the least $\alpha$ such that
\[p^{2,\ell}_{\rho, s}(\alpha) \neq p^{2,\ell}_{\rho', s'}(\alpha)\] is the same as the least $\alpha$ such that
\[p^{2,0}_{\rho, s}(\alpha) \neq p^{2,0}_{\rho', s'}(\alpha).\]
\end{enumerate}
Item (\ref{coherencec}) ensures the preservation of condition (\ref{itemeight}) of Definition \ref{condtreedef}, i.e., that, for all $(\rho, s), (\rho', s') \in Z$ and all $\alpha \leq \alpha_{*}$,
if
\[s\restrict \{ j \in u_{k} : \alpha_{j} <\alpha \} = s'\restrict \{ j \in u_{k} : \alpha_{j} <\alpha\}\]
then $p^{2,\ell}_{\rho, s} \restrict \alpha = p^{2,\ell}_{\rho', s'} \restrict \alpha$.

In substage $\ell$ of stage 2, strengthen $p^{2,\ell}_{\rho_{\ell}, s_{\ell}}$ (possibly trivially) to a condition $p^{2,\ell+1}_{\rho_{\ell},s_{\ell}}$ in $B_{k}$.
For each of the other conditions $p^{2,\ell}_{\rho, s}$, letting $\alpha$ be minimal such that \[p^{2,\ell}_{\rho_{\ell}, s_{\ell}}(\alpha) \neq p^{2,\ell}_{\rho, s}(\alpha)\]
let $p^{2,\ell + 1}_{\rho, s}$ be the concatenation of $p^{2,\ell + 1}_{\rho_{\ell}, s_{\ell}} \restrict \alpha$ with $p^{2,\ell}_{\rho, s} \restrict [\alpha, \alpha_{*})$.

Finally, in stage 3, working recursively through the members of $u_{k}$ from top to bottom, and still working in $Y_{k}$, we strengthen each $p^{2,|Z|}_{\rho, s}$ to a condition $p_{\rho, s}$ which decides each value $\delta_{p_{\rho, s}(\alpha_{i})}$ ($i \in u_{k}$), while preserving item (\ref{rhoscondtwo}) from earlier in this proof and thus condition (\ref{itemeight}) from Definition \ref{condtreedef}. For each $i \in u_{k}$ we can accomplish this in $|Z|$ many steps via an argument similar to the ones used in stages 1 and 2. We now choose conditions $p^{3,i,\ell}_{\rho, s}$ for $(\rho, s) \in Z$, $i \in u_{k}$, $\ell \leq |Z|$ such that
\begin{enumerate}
  \setcounter{enumi}{4}
  \item for all $(\rho, s) \in Z$, 
  \begin{enumerate}
     \item $p^{3,\max(u_{k}),0}_{\rho,s} = p^{2,|Z|}_{\rho, s}$; 
     \item for all $i \in u_{k}$ and $\ell < |Z|$, $p^{3,i,\ell+1}_{\rho, s} \leq p^{3,i,\ell}_{\rho,s}$;
     \item\label{rhoscondfivec} for all $i \in u_{k}$, $\ell \leq |Z|$ and $\alpha \in \alpha_{*} \setminus \alpha_{i}$, $p^{3,i,\ell+1}_{\rho,s}(\alpha) = p^{3,i,\ell}_{\rho,s}(\alpha)$;
     \item when $i' < i$ are successive members of $u_{k}$, $p^{3,i,|Z|}_{\rho, s} = p^{3,i',0}_{\rho, s}$;
     \item for all $i \in u_{k}$ and $\ell < |Z|$, $p^{3,i,\ell+1}_{\rho, s}\restrict \alpha_{i}$ decides the value of 
     $\delta_{p^{3,i,\ell+1}_{\rho, s}(\alpha_{i})}$;
  \end{enumerate}
  \item\label{coherence3} for all $i \in u_{k}$ and $\ell \leq |Z|$, and all distinct $(\rho, s), (\rho', s') \in Z$, the least $\alpha$ such that
\[p^{3,i,\ell}_{\rho, s}(\alpha) \neq p^{3,i,\ell}_{\rho', s'}(\alpha)\] is the same as the least $\alpha$ such that
\[p^{2,0}_{\rho, s}(\alpha) \neq p^{2,0}_{\rho', s'}(\alpha).\]
\end{enumerate}
In the substage of stage 3 corresponding to $i \in u_{k}$ and $\ell < |Z|$, strengthen $p^{3,i,\ell}_{\rho_{\ell}, s_{\ell}}$ (possibly trivially, and only in coordinates below $\alpha_{i}$) to a condition $p^{3,\ell+1}_{\rho_{\ell},s_{\ell}}$ deciding the value of $\delta_{p^{3,i,\ell}_{\rho_{\ell}, s_{\ell}}(\alpha_{i})}$.
For each of the other conditions $p^{3,i,\ell}_{\rho, s}$, letting $\alpha$ be minimal such that \[p^{3,i,\ell}_{\rho_{\ell}, s_{\ell}}(\alpha) \neq p^{3,i,\ell}_{\rho, s}(\alpha)\]
let $p^{3,i,\ell + 1}_{\rho, s}$ be the concatenation of $p^{3,i,\ell + 1}_{\rho_{\ell}, s_{\ell}} \restrict \alpha$ with $p^{3,i,\ell}_{\rho, s} \restrict [\alpha, \alpha_{*})$.

Finally, for each $(\rho, s)\in Z$ let $p_{\rho^{\frown}\langle t_{\rho, s} \rangle} = p_{\rho, s}$ be $p^{3,\min(u_{k}),|Z|}_{\rho,s}$. 
\end{proof}







We will need to consider restrictions of our condition trees to initial segments of our iterations. Given a tree $T$ in $\cT$, we let $[T]$ denote the set of infinite branches through $T$.

\begin{df}\label{projdefs} Suppose that $(T, \langle p_{\rho} : \rho \in T\rangle)$ is a condition tree relative to some suitable data sequence
\[S = \langle \bar{C}, F, \bq, p, X, \langle Y_{k} : k < \omega \rangle, \langle i_{n} : n < \omega \rangle, \bar{D} \rangle,\]
$\gamma = X \cap \omega_{1}$, $\alpha_{*}$ is the length of $\bq$, and that $\langle \alpha_{i} : i < i_{*} \rangle$ enumerates $X \cap \alpha_{*}$ in increasing order.
For each $k \in \omega$, let $u_{k} = \{ i_{n} : n < k \}$. Let $\alpha_{i_{*}}$ denote $\alpha_{*}$.
\begin{itemize}
\item For each $j \leq i_{*}$, let
\begin{itemize}
\item for each $k \in \omega$, $u_{k}(j)$ be $u_{k} \cap j$;
\item for each $\rho \in T$, $\rho^{[j]}$ be \[\langle s^{\rho}_{\ell} \restrict u_{\ell}(j) : \ell < |\rho|\rangle,\]
let $s^{\rho^{[j]}}_{\ell}$ (for $\ell < |\rho^{[j]}|$) denote $s^{\rho}_{\ell} \restrict u_{\ell}(j)$,
and $p^{[j]}_{\rho}$ be $p_{\rho} \restrict \alpha_{j}$;
\item $T^{[j]}$ be the tree of sequences $\{\rho^{[j]}: \rho \in T\}$.
\end{itemize}
\item For all $i \leq j < i_{*}$ and each $\rho \in T^{[j]}$,  let $\rho^{[i]}$ be $\langle \rho(\ell) \restrict u_{\ell}(i) : \ell < |\rho|\rangle$.
\item For all $i \leq j \leq i_{*}$, let
\begin{itemize}
\item $\proj_{j,i} : T^{[j]} \to T^{[i]}$ be the function defined by setting
\[\proj_{j,i}(\rho) = \rho^{[i]};\]

\item for each $x \in [T^{[i]}]$, $T^{[j]}(x)$ be $\{ \rho \in T^{[j]} :  \rho^{[i]} \trianglelefteq x\}$;
\item for each $x \in [T^{[j]}]$, $x^{[i]}$ be the union of $\{ (x \restrict k)^{[i]} : k \in \omega \}$.
\end{itemize}
\end{itemize}
\end{df}

We record some observations on these definitions.

\begin{thrm}\label{computationthrm} Suppose that $(T, \langle p_{\rho} : \rho \in T\rangle)$ is a condition tree relative to some suitable data sequence
\[S = \langle \bar{C}, F, \bq, p, X, \langle Y_{k} : k < \omega \rangle, \langle i_{n} : n < \omega \rangle, \bar{D} \rangle,\]
$\gamma = X \cap \omega_{1}$, $\alpha_{*}$ is the length of $\bq$, and that $\langle \alpha_{i} : i < i_{*} \rangle$ enumerates $X \cap \alpha_{*}$ in increasing order.
For each $\ell \in \omega$, let $u_{\ell} = \{ i_{n} : n < \ell, \alpha_{i_{n}} \in Y_{\ell} \}$. Let $\alpha_{i_{*}}$ denote $\alpha$.
\begin{enumerate}
  \item For all $\rho \in T$ and $\ell \in \omega$, $\rho^{[i_{*}]} = \rho$ and $u_{\ell}(i_{*}) = u_{\ell}$.
  \item\label{remtwo} If $j \leq i_{*}$ and $\rho \in T^{[j]}$ has length $k + 1$, then $\rho$ has exactly $2^{|u_{k}(j)|(m_{k+1} - m_{k})}$ many immediate successors in $T^{[j]}$.
  \item\label{remthree} For all $i \leq j \leq i_{*}$ and $x \in [T^{[i]}]$, if $\rho \in T^{[j]}(x)$ has length $k+1$, then $\rho$ has exactly $2^{|u_{k}(j) \setminus u_{k}(i)|(m_{k+1} - m_{k})}$ many successors in $T^{[j]}(x)$.
  \item\label{remfour} For all $i \leq j \leq i_{*}$, $k \in \omega \setminus \{0\}$, $\rho \in T^{[i]}$ of length $k+1$ and $\nu \in T^{[j]}$ of length $k$ with $\nu^{[i]} = \rho \restrict k$, the set \[\{ \tau \in T^{[j]} : \tau \restrict k = \nu, \, \tau^{[i]} = \rho\}\] has size
      $2^{|u_{k-1}(j)\setminus u_{k-1}(i)|(m_{k} - m_{k-1})}$.
  \item\label{remfive} For all $c \leq i \leq j \leq i_{*}$, $x \in [T^{[c]}]$,  $k \in \omega \setminus\{0\}$, $\rho \in T^{[i]}(x)$ of length $k + 1$ and $\nu \in T^{[j]}(x)$ of length $k$ with $\nu^{[i]} = \rho \restrict k$, the set \[\{ \tau \in T^{[j]}(x) : \tau \restrict k = \nu, \, \tau^{[i]} = \rho\}\] has size  $2^{|u_{k-1}(j)\setminus u_{k-1}(i)|(m_{k} - m_{k-1})}$.


\end{enumerate}
\end{thrm}

\begin{proof}
  The first conclusion follows immediately from the definitions. For part (\ref{remtwo}), note that by parts (\ref{itemtwob}), (\ref{itemthree}) and (\ref{itemthreepfive}) of Definition \ref{condtreedef}, the immediate successors of a $\rho \in T^{[j]}$ of length $k + 1$ correspond to the functions from $u_{k}(j)$ to $2^{m_{k+1}}$ which pointwise extend $s^{\rho}_{k}$, of which there are $2^{|u_{k}(j)|(m_{k+1} - m_{k})}$ many.
  For part (\ref{remtwo}), the same parts of Definition \ref{condtreedef} imply that the immediate successors of a $\rho \in T^{[j]}(x)$ of length $k + 1$ correspond to the functions from $u_{k}(j)\setminus u_{k}(i)$ to $2^{m_{k+1}}$ which pointwise extend $s^{\rho}_{k}\restrict (u_{k}(j) \setminus u_{k}(i))$, of which there are $2^{|u_{k}(j)\setminus u_{k}(i)|(m_{k+1} - m_{k})}$ many.

  For part (\ref{remfour}), the immediate successors of $\nu$ correspond to the functions $s \colon u_{k-1}(j) \to 2^{m_{k}}$ which pointwise extend $s^{\nu}_{k-1}$, and the successors $\tau$ of $\nu$ with $\tau^{[i]} = \rho$ correspond to those $s$ which agree with $s^{\rho}_{k}$ on $u_{k-1}(i)$. The number of such functions is $2^{|u_{k-1}(j)\setminus u_{k-1}(i)|(m_{k} - m_{k-1})}$. Part (\ref{remfive}) is similar.
\end{proof}

\begin{remark} Using the objects introduced in the statement of Theorem \ref{computationthrm}, fix for each pair $(i,j)$ with $i < j \leq i_{*}$ a tree $T_{i,j}$ of finite sequences with unique nodes of lengths $0$ and $1$ and each node of each length $k > 0$ having exactly $2^{|u_{k-1}(j) - u_{k-1}(i)|(m_{k}-m_{k-1})}$ many immediate successors. The computations in Theorem \ref{computationthrm} show that each tree $T^{[j]}$ is isomorphic to the tree of pairs of sequences $(\rho, \nu)$ of the same length with $\rho \in T^{[i]}$ and $\nu \in T_{i,j}$. Similarly, each tree $T^{[j]}(x)$ for some $x \in [T^{[c]}]$ for some $c \leq i$ is isomorphic to the tree of pairs of sequences $(\rho, \nu)$ of the same length with $\rho \in T^{[i]}(x)$ and $\nu \in T_{i,j}$. Under these identifications, the function $\proj_{j,i}$ from Definition \ref{projdefs} corresponds to the usual first-coordinate projection function.
These facts let us apply Fubini's theorem and the Kuratowksi-Ulam theorem to pairs $[T^{[i]}], [T^{[j]}]$ (with the latter viewed as $[T^{[i]}] \times [T_{i,j}]$) and pairs $[T^{[i]}(x)], [T^{[j]}(x)]$ (with the latter viewed as $[T^{[i]}(x)] \times [T_{i,j}]$), which we do in the proof of Lemma \ref{finallemma}.
\end{remark}

The following observation will be used at the end of the proof of Lemma \ref{finallemma}.

\begin{remark}\label{tclosedrem}
Suppose that $(T, \langle p_{\rho} :\rho \in T \rangle)$ is a condition tree for a suitable data sequence $S = \langle \bar{C}, F, \bq, p, X, \bar{Y}, \bar{\i}, \bar{D} \rangle$, and that $i_{*}$ is the ordertype of $X \cap \alpha_{*}$, where $\alpha_{*}$ is the ordertype of the iteration $\bq$. Let $j \leq i_{*}$ and let $K$ be a closed subset of $[T^{[j]}]$. Then for all $x \in [T^{[j]}]$, $x \in K$ if and only if, for cofinally many $c < j$, $x^{[c]} \in \proj_{j,c}[K]$.
\end{remark}

\section{Preserving pathology}\label{pathsec}



In this section we add a condition on our dense partial function $F$, and recall some important properties of the null and meager ideals. We say that an ideal $I$ on a Polish space is \emph{Borel} if it is generated by Borel sets (i.e., every member of the ideal is contained in a Borel member of the ideal).  The meager and null ideals are Borel in this sense. Given an ideal $I$ on a set $X$, we let $I^{+}$ denote $\cP(X) \setminus I$, and we say that a subset of $X$ is $I$-\emph{Borel-large} if it intersects every Borel set in $I^{+}$.



\begin{df}
  Given a topological space $X$, an ideal $I$ on $X$, and a set $S$ with at least two elements, we say that a partial function $F \colon X \to S$ is $I$-\emph{pathological} if for
  every $s \in S$, $F^{-1}[\{s\}]$ is $I$-Borel-large.
\end{df}


When $I$ is the ideal of countable sets, we say that $F$ is \emph{totally pathological}.
We say that $F$ is \emph{Lebesgue-pathological} when $I$ is the ideal of Lebesgue null sets, and
\emph{category-pathological} when $I$ is the ideal of meager sets.
Note that total pathology implies both Lebesgue and category pathology.

\begin{remark}A totally pathological function is dense in the sense of Definition \ref{denfundef}, and the existence of a totally pathological function implies
the existence of sets of reals without the perfect set property.
\end{remark}

A standard construction, using a wellordering of the continuum and the fact that uncountable Borel sets have cardinality continuum, shows that $\ZFC$ implies the existence of (total) totally pathological functions on $2^{\omega}$.
Moreover, if $a$ is a subset of $\omega$, then the same construction, using a $\Sigma^{1}_{2}(a)$ wellordering of $(2^{\omega})^{\bL[a]}$,
shows that there is in $\bL[a]$ a total totally pathological function $F \colon 2^{\omega} \to 2$ which is $\Sigma^{1}_{2}$ in $a$.

In our proof in Section \ref{itsec} we iterate forcings of the form $Q_{\bar{C}, F, g}$ using a fixed function $F$ which is totally pathological and $\Sigma^{1}_{2}(a)$ in the ground model $\bL[a]$. Since our iterations will add reals, the function $F$ will not remain totally pathological in the corresponding forcing extensions.  For the proof of our main theorem, we need to know that $F$ remains $I$-pathological throughout the iteration, where $I$ is either the null ideal or the meager ideal.

\begin{df}\label{prespropdef}  A Borel ideal $I$ has the \emph{preservation property} if every $I$-Borel-large set remains so after any countable support iteration of partial orders of the form $Q_{\bar{C}, F, g}$.
\end{df}

\begin{remark}\label{pathpresrem} The ideals that we consider in this paper are ideals on the set of infinite branches through finitely branching trees of height $\omega$.
Theorem 5.2 of \cite{KellSh} implies that the versions of the meager ideal on these spaces have the preservation property (for a much wider class of partial orders than the ones considered here). Theorem 6.3 and Lemma 6.4 of \cite{KellSh} show the same thing for the ideals of Lebesgue null sets.
\end{remark}

The proof of Theorem \ref{mainthrm} will also use the fact that the meager and null ideals are regular, in the sense that every Borel set not in either ideal contains a closed set which is not in the same ideal.

\section{Measure and Category}\label{itsec}

In this section we prove our main theorem.

\begin{thrm}\label{mainthrm} Suppose that $a$ is a subset of $\omega$ such that $\bV = \bL[a]$, and let $\bar{C}_{a}$ be the canonical ladder system relative to $a$. Let $F$ be a totally pathological dense function whose graph is $\uTSigma^{1}_{2}$ in $\bL[a]$.
Suppose that \[\langle \bbP_{\alpha}, \dot{Q}_{\beta}, \dot{g}_{\beta}, \dot{h}_{\beta} : \alpha \leq \omega_{2}^{\bV}, \beta < \omega_{2}^{\bV} \rangle\]
is a fully bookkeeping element of $\bQ_{\bar{C}_{a}, F}$.
Then $\bbP_{\omega_{2}^{\bV}}$ forces that every subset of $\breals$ which is either universally measurable or universally categorical is $\uTDelta^{1}_{2}$.
\end{thrm}

The proof of Theorem \ref{mainthrm} runs from here through the end of this section. We will concentrate on the universally measurable case, and note the changes that need to be made for universally categorical sets.


By Remark \ref{reducerem}, it suffices to prove that each tail of any iteration as in the statement of Theorem \ref{mainthrm} is $\cA$-representing, where $\cA$ is either the collection of universally measurable subsets of $\breals$ or the collection of universally categorical sets.
Since the ideals of Lebesgue-null sets and meager sets have the preservation property (as in Remark \ref{pathpresrem}), in each intermediate model of such a forcing iteration, $F$ is Lebesgue-pathological and category-pathological (but no longer totally pathological once the iteration has added new subsets of $\omega$).
It remains then to prove the following.

\begin{lem}\label{representinglem}
  Suppose that
  \begin{itemize}
  \item $\bar{C}$ is a ladder system on $\omega_{1}$,
  \item $F \colon 2^{\omega} \to 2$ is a dense function which is Lebesgue-pathological and category-pathological,
  \item $\bq = \langle \bbP_{\alpha}, \dot{Q}_{\beta}, \dot{g}_{\beta}, \dot{h}_{\beta} : \alpha \leq \alpha_{*}, \beta < \alpha_{*} \rangle$ is in $\bQ_{\bar{C}, F}$ and
  \item $\bar{\bbP}$ is forcing iteration $\langle \bbP_{\alpha}, \dot{Q}_{\beta} : \alpha \leq \alpha_{*}, \beta < \alpha_{*}\rangle$.
  \end{itemize}
  Then $\bbP_{\alpha_{*}}$ is $\cA$-representing, where $\cA$ is either the collection of universally measurable subsets of $\breals$ or the collection of universally categorical sets.
\end{lem}

Fix the objects introduced in the statement of Lemma \ref{representinglem} from here through the end of proof of Lemma \ref{finallemma}, which will complete the proof of Lemma \ref{representinglem} and thus Theorem \ref{mainthrm}.
We have to show that if
  \begin{itemize}
    \item $p$ is a condition in $\bbP_{\alpha_{*}}$,
    \item $A \subseteq \breals$ is either universally measurable or universally categorical,
    \item $\tau$ is a $\bbP_{\alpha_{*}}$-name for an element of $\breals$
  \end{itemize}
  then there exist a condition $p' \leq p$ and a Borel set $B \subseteq \breals$ such that
  \begin{itemize}
    \item $B$ is either contained in or disjoint from $A$ and
    \item $p' \forces \tau \in \check{B}$.
  \end{itemize}
  In fact the set we find will be a continuous image of a Borel set; as every analytic set is a union of $\aleph_{1}$ many Borel sets (in an absolute way, see page 201 of \cite{Ke}), this suffices.
  To find our desired condition and analytic set, fix such $p$, $A$ and $\tau$, and fix in addition $X$, $\bar{Y}$, $\bar{\i}$ and $\bar{D}$ (with $\tau \in X$)  such that
  $S = \langle \bar{C}, F, \bq, p, X, \bar{Y}, \bar{\i}, \bar{D} \rangle$ is a suitable data sequence. By Lemma \ref{fusionlemma}, we may fix in addition
  a condition tree $(T, \langle p_{\rho} : \rho \in T\rangle)$ relative to $S$.
  There is then a continuous function $f \colon [T] \to \breals$ such that each value $f(x)$ is the realization of $\tau$ by
  $\{ p_{x \restrict n} : n \in \omega \}$, since any such set meets all the dense open subsets of $\bbP_{\alpha_{*}}$ in $X$. Since $A$ is universally measurable (or universally categorical), there exist then Borel sets $B$ and $N$ contained in $[T]$ such that $N$ is Lebesgue null (or meager) and $f^{-1}[A] \bigtriangleup B \subseteq N$. It suffices then to find a condition $p' \leq p$ forcing the existence (in the forcing extension) of an $x \in [T] \setminus N$ (reinterpreted) such that $\{ p_{x \restrict n} : n \in \omega\}$ is a subset of the generic filter. We will in fact start with a closed non-null (or nonmeager) set $K \subseteq [T]$ disjoint from $N$ and find a condition $p' \leq p$ forcing the existence of an $x \in K$ (reinterpreted)  such that $\{ p_{x \restrict n} : n \in \omega\}$ is a subset of the generic filter.
An instance of such a condition will be a Lebesgue-$(i_{*}, \check{K})$-solution (respectively, a category-$(i_{*}, \check{K})$-solution) relative to $S$ and $(T, \langle p_{\rho} : \rho \in T\rangle)$, as in Definition \ref{soldef} below.

Given $j \leq i_{*}$, we let $\dot{\sigma}_{j}$ be a $\bbP_{j}$-name for the set of $\rho \in T^{[j]}$ for which $p_{\rho} \in G_{j}$, where $G_{j}$ denotes the restriction of the generic filter $G$ to $\bbP_{\alpha_{j}}$. In the desired case where this set is infinite, we identify it with the corresponding element of $[T^{[j]}]$.


%
%

\begin{df}\label{soldef}
 Suppose that $(T, \langle p_{\rho} : \rho \in T\rangle)$ is a condition tree relative to some suitable data sequence
\[S = \langle \bar{C}, F, \bq, p, X, \langle Y_{k} : k \in \omega \rangle, \langle i_{n} : n < \omega \rangle, \bar{D} \rangle.\]
Let
\begin{itemize}
\item $\bq$ be $\langle \bbP_{\alpha}, \dot{Q}_{\beta}, \dot{g}_{\beta}, \dot{h}_{\beta} : \alpha \leq \alpha_{*}, \beta < \alpha_{*} \rangle$;
\item $\langle \alpha_{i}  : i < i_{*} \rangle$ enumerate $X \cap \alpha_{*}$ in increasing order;
\item $\alpha_{i_{*}}$ denote $\alpha_{*}$;
\item $j$ be an element of $i_{*}+1$;
\item $\dot{K}$ be a $\bbP_{j}$-name for a subset of $[T]$;
\item $q$ be a condition in $\bbP_{\alpha_{j}}$;
\end{itemize}
We say that $q$ is a Lebesgue-$(j,\dot{K})$-solution if $q$ forces that
\begin{itemize}
\item $\dot{\sigma}_{j} \in [T^{[j]}]$;
\item if $j < i_{*}$ then $\dot{K} \cap [T(\dot{\sigma}_{j})]$ is non-null in $[T(\dot{\sigma}_{j})]$;
\item if $j = i_{*}$ then $\dot{\sigma}_{j} \in \dot{K}$.
\end{itemize}
We say that $q$ is a category-$(j,\dot{K})$-solution if $q$ forces that
\begin{itemize}
\item $\dot{\sigma}_{j} \in [T^{[j]}]$;
\item if $j < i_{*}$ then $\dot{K} \cap [T(\dot{\sigma}_{j})]$ is nonmeager in $[T(\dot{\sigma}_{j})]$;
\item if $j = i_{*}$ then $\dot{\sigma}_{j} \in \dot{K}$.
\end{itemize}
\end{df}

The statement of the following lemma uses the objects introduced in this section.
The case where $i=0$, $j = i_{*}$ and $\dot{K} = \check{K}$ for some non-null closed $K$ disjoint from $N$ as above proves Lemma \ref{representinglem} and thereby completes the proof of Theorem \ref{mainthrm}.

\begin{lem}\label{finallemma}
Suppose that $i < j \leq i_{*}$, $\dot{K}$ is a $\bbP_{i}$-name for a closed subset of $[T]$ and $q$ is a Lebesgue-$(i, \dot{K})$-solution (respectively, a
category-$(i, \dot{K})$-solution).
Then there is a Lebesgue-$(j,\dot{K})$-solution (category-$(j,\dot{K})$-solution) $q' \in \bbP_{\alpha_{j}}$ such that $q' \restrict \alpha_{i} = q$.
\end{lem}


\begin{proof}
We prove this by induction on $j$ for all $i$ and $\dot{K}$ simultaneously.
We first note that the case $j = j' + 1$ reduces to the case where $j = i + 1$ by the induction hypothesis.


For the case where $j = i + 1$, suppose that $G_{i} \subseteq \bbP_{\alpha_{i}}$ is a generic filter with $q \in G_{i}$.
Then $G_{i} \cap X$ is $X$-generic. Let $e$ be $\dot{g}_{\alpha_{i},G_{i}}(X \cap \omega_{1})$, let  $y = \dot{\sigma}_{i,G_{i}}$ and let
$\hat{y}$  be the function from $i$ to $2^{\omega}$ induced by $y$, so that, for all $c < i$, $\hat{y}(c) = \bigcup\{s^{\rho}_{k}(c) : \rho \in y,\, k < |\rho|,\, c \in u_{k}\}$.
Applying Fubini's theorem (or the Kuratowksi-Ulam theorem) and the regularity of the null ideal (or meager ideal), let $K'$ be a closed non-null (nonmeager) subset of $[T^{[j]}(y)]$ with the property that for each $z \in K'$ the set $\{ x \in K_{G_{i}} : \proj_{i_{*},j}(x) = z\}$ is non-null (nonmeager).


Let $k \in \omega$ be minimal with $i\in u_{k}$. Then $k > 0$. Since $i$ is not in $u_{k-1}$, the tree $T^{[j]}(y)$ has a unique node $\rho_{k}$ of length $k$, where for each $\ell < k$ and each $c \in u_{\ell}(j)$, $c$ is in $u_{\ell}(i)$ and
$s^{\rho_{k}}_{\ell}(c) = \hat{y}(c)\restrict m_{\ell}$. Moreover, for all $\rho$, $\rho'$ of length $k+1$ in $T(y)$ (formally defined as $T^{[i_{*}]}(y)$), $s^{\rho}_{k} \restrict u_{k-1}(j) = s^{\rho'}_{k} \restrict u_{k-1}(j)$ (as just established), so, by condition
(\ref{itemthreepfive}) of Definition \ref{condtreedef}, $s^{\rho'}_{k} \restrict u_{k}(j) = s^{\rho'}_{k} \restrict u_{k}(j)$.
It follows that $T^{[j]}(y)$ has a unique node $\rho_{k+1}$ of length $k+ 1$.
The condition $p_{\rho_{k+1}} \restrict \alpha_{j}$ forces
\[\langle \dot{h}_{{\alpha}_{i}}(C_{X \cap \omega_{1}}(n)) : n < m_{k}\rangle\] to be $s^{\rho_{k+1}}_{k}(i)$, which is in $2^{m_{k}}$.
For each $\ell \geq k+1$, each node $\rho$ of $T^{[j]}(y)$ of length $\ell$ has $2^{m_{\ell} - m_{\ell-1}}$ many immediate successors, corresponding to the possible extensions of $s^{\rho}_{\ell-1}(i)$ of length $m_{\ell}$.

Let $T_{*}$ be the tree of elements of $2^{\less\omega}$ compatible with the sequence $s^{\rho_{k+1}}_{k}(i)$.
Thus $[T_{*}]$ is a clopen subset of $2^{\omega}$, and equal to \[\{ \bigcup\{ s^{z \restrict (\ell + 1)}_{\ell}(i) : \ell \in \omega \setminus k\} : z \in [T^{[j]}(y)]\}.\]
By Remark \ref{pathpresrem}, $F$ is both Lebesgue-pathological and category-pathological in $V[G_{i}]$.
Since the null (meager) ideal on $[T_{*}]$ is
isomorphic to that on $[T^{[j]}(y)]$
there is a $z \in K'$ such that \[F\left(\bigcup_{\ell \in [k, \omega)}s^{z \restrict (\ell + 1)}_{\ell}(i)\right) = e.\]
We can then let $q'$ be $(q,\dot{r})$, where $\dot{r}$ is a $\bbP_{\alpha_{i}}$ name for $\bigcup \{ s^{z \restrict (\ell + 1)}_{\ell}(i) : \ell \in \omega \setminus k\}$, which is an $X[G_{i} \cap X]$-generic condition in $\dot{Q}_{\alpha_{i}, G_{i}}$.

For the case where $j$ is a limit ordinal, applying Fubini's theorem (or the Kuratowksi-Ulam theorem) again, let $\dot{K}_{0}$ be a $\bbP_{i}$-name for a closed subset of (the realization of) $\dot{K}$ such that
$q$ is a Lebesgue-$(i,\dot{K}_{0})$-solution (category-$(i,\dot{K}_{0})$-solution) and, $q$ forces in addition that for all $y \in \proj_{i_{*}, j}[\dot{K}_{0}]$ the set $\dot{K} \cap [T(y)]$
is non-null (nonmeager) (of course, this step is not necessary if $j = i_{*}$). This case now follows from the induction hypothesis applied successively to $\dot{K}_{0}$ and the members of any cofinal $\omega$-seqence in the interval $(i,j)$, along with Remark \ref{tclosedrem}.
\end{proof}

\section{Borel reinterpreations}

Finally, we show that the Borel reinterpretations of universally measurable sets under the forcing extensions considered here are again universally measurable.

\begin{thrm}\label{umeasreint}
Let $\bar{C}$ be a ladder system on $\omega_{1}$
and let $F \colon 2^{\omega} \to 2$ be a null-pathological dense partial function.
Let \[ \langle \bbP_{\alpha}, \dot{Q}_{\beta}, \dot{g}_{\beta}, \dot{h}_{\beta} : \alpha \leq \alpha_{*}, \beta < \alpha_{*} \rangle,\]
be an element of $\bQ_{\bar{C}, F}$. If $A \subseteq \breals$ is universally measurable, then the Borel reinterpretation of $A$ is
universally measurable in any forcing extension by $\bbP_{\alpha_{*}}$.
\end{thrm}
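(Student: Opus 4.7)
The plan is to invoke the characterization of universal measurability in terms of random forcing from \cite{LrNeSh} in order to reduce Theorem~\ref{umeasreint} to an $\cA$-representing statement about a composite forcing, and then to adapt the condition-tree machinery of Section~\ref{seqtreesec} and Lemma~\ref{finallemma} to that composite. By that characterization, a set $B \subseteq \breals$ is universally measurable in a model $W$ iff in every extension of $W$ by random forcing with respect to a Borel probability measure in $W$, the Borel reinterpretations of $B$ and $\breals \setminus B$ partition $\breals$. Applying this to the Borel reinterpretation $A^{*}$ of $A$ in $V[G]$, I need to show that for every Borel probability measure $\nu$ in $V[G]$ and every real $r$ in the resulting random extension $V[G][H]$, $r$ lies in a $V[G]$-coded Borel set contained in $A^{*}$ or contained in $\breals \setminus A^{*}$. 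Since a $V$-coded Borel set $B$ with $B \subseteq A$ reinterprets in $V[G]$ to a $V[G]$-Borel subset of $A^{*}$ (and analogously when $B \cap A = \emptyset$), it suffices to exhibit such $r$ inside a $V$-coded Borel set that is either contained in or disjoint from $A$. Equivalently, I must show that the composite forcing consisting of $\bbP_{\alpha_{*}}$ followed by random forcing with respect to a $\bbP_{\alpha_{*}}$-name for a Borel probability measure is $\cA_{\cI}$-representing (in $V$) for $\cI$ the null ideal system, with $A$ the given universally measurable set.

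To establish this composite $\cA$-representing property, I would adapt the construction of Lemma~\ref{fusionlemma} and the recursion of Lemma~\ref{finallemma}. A composite condition is a pair $(p, \dot{b})$ with $p \in \bbP_{\alpha_{*}}$ and $\dot{b}$ a $\bbP_{\alpha_{*}}$-name for a $\nu$-positive Borel subset of $\breals$; refinement at the final layer of the condition tree corresponds to shrinking $\dot{b}$ to a positive-measure Borel subset, rather than to fixing a new value of a generic function $\dot{h}_{\alpha}$. The null-pathology of $F$, preserved through the iteration by Definition~\ref{prespropdef}, is precisely what is needed at this measure-theoretic step: for any $\nu$-positive name $\dot{b}$, the pathology of $F$ in the intermediate extension supplies a dense family of positive-measure subnames $\dot{b}' \leq \dot{b}$ forcing the random real into $B^{V[G]}$ for some $V$-Borel $B$ contained in $A$ or disjoint from $A$. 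The absolute Fubini property of $\cI$ lets me combine these local refinements with the countable-support tree from $\bbP_{\alpha_{*}}$ into a single generic condition playing the role of a $(j, I_{T}, N)$-solution from Definition~\ref{soldef} for the composite forcing.

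With the composite $\cA$-representing property in hand, the conclusion follows in $V[G]$ by a maximal-antichain argument in random forcing: inside the Boolean algebra of $\nu$-positive Borel sets modulo $\nu$-null, the collection of sets contained in $B^{V[G]}$ for some $V$-coded Borel $B$ with $B \subseteq A$ or $B \cap A = \emptyset$ is dense, and since random forcing is ccc, it contains a countable maximal antichain whose union has full $\nu$-measure; this partitions $\breals$ modulo $\nu$-null into $V[G]$-Borel pieces each sitting inside $A^{*}$ or inside $\breals \setminus A^{*}$, exhibiting $A^{*}$ as a $V[G]$-Borel set modulo null, whence $\nu$-measurable. The main obstacle is the adaptation of the condition tree to the composite forcing: random forcing is ccc rather than $(\omega,\infty)$-distributive, and it is not of the form $Q_{\bar{C}, F, g}$, so the fusion argument of Lemma~\ref{fusionlemma} does not apply to the random layer directly. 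Interleaving the measure-theoretic selection of positive-measure Borel subsets with the pathological refinement of the iteration, while maintaining the genericity and coherence conditions of Definitions~\ref{condtreedef} and~\ref{soldef}, is the delicate technical point, and it is here that the specifically null-pathological (rather than merely $I$-pathological for an arbitrary ideal $I$) hypothesis on $F$ in the statement of the theorem is essential.
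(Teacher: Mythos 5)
Your plan diverges from the paper at the very first step, and the divergence hides a genuine gap. You propose to reduce the theorem to an $\cA$-representing statement about a composite forcing $\bbP_{\alpha_*}$ followed by random forcing, and then to rerun the condition-tree machinery of Lemma~\ref{fusionlemma} and Lemma~\ref{finallemma} for that composite. But the whole condition-tree apparatus (Definition~\ref{condtreedef}, Lemma~\ref{fusionlemma}) is built around iterands of the form $Q_{\bar{C},F,g}$, which by Lemma~\ref{properlem} are $(\omega,\infty)$-distributive and in particular add no reals: this is what makes it possible to decide, already in $V$, all the relevant values of $\langle \dot h_{\alpha_i}(C_\gamma(n)) : n<m_k\rangle$ and to extract the finitely-branching tree $T$. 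Random forcing adds reals and is ccc rather than distributive, so the same fusion step does not go through, and the conditions in Definition~\ref{condtreedef} and Definition~\ref{soldef} would have to be reformulated from scratch. You acknowledge this is the ``delicate technical point,'' but you do not indicate how to do it, and in fact no such adaptation is needed.

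The paper's proof is more economical and avoids random forcing entirely. It takes a $\bbP_{\alpha_*}$-name $\tau$ for a Borel \emph{measure} (not a real), builds the suitable data sequence and condition tree exactly as for Lemma~\ref{representinglem}, and obtains a continuous $f\colon [T]\to\breals$ sending each branch $x$ to the realization of $\tau$ along $\{p_{x\restrict n}:n\in\omega\}$. It then defines, in the ground model, a single Borel measure $\nu$ on $\breals$ by the integral $\nu(B)=\int f(x)(B)\,d\lambda$ where $\lambda$ is Lebesgue measure on $[T]$. Applying universal measurability of $A$ in $V$ to $\nu$ gives Borel $B,E$ with $A\bigtriangleup B\subseteq E$ and $\nu(E)=0$; by Fubini the set $N=\{x\in[T] : E \text{ is not } f(x)\text{-null}\}$ is $\lambda$-null, i.e.\ $N\in I_T$. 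Lemma~\ref{finallemma} with $i=0$, $j=i_*$ then produces a generic condition forcing the branch to avoid $N$, so that $E$ is $f(\dot\sigma_{i_*})$-null and $B$ (reinterpreted) witnesses measurability of $A^*$ relative to the realization of $\tau$. Thus the measure-theoretic step is discharged in $V$ by choosing the right measure $\nu$, and the already-proved Lemma~\ref{finallemma} is used as a black box, with no change to the forcing or the tree. If you want to salvage your route you would need to prove a new fusion lemma mixing $Q_{\bar C,F,g}$-steps with random forcing, which is a substantial and unnecessary detour.
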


\begin{proof}
  It follows from Lemma \ref{representinglem} that the Borel reinterpretations of $A$ and $\breals \setminus A$ will be complements in any forcing extension by $\bbP_{\alpha_{*}}$. We have to show that if
  \begin{itemize}
    \item $p$ is a condition in $\bbP_{\alpha_{*}}$,
    \item $A \subseteq \breals$ is universally measurable and
    \item $\tau$ is a $\bbP_{\alpha_{*}}$-name for a Borel measure on $\breals$
  \end{itemize}
  then there exist a condition $p' \leq p$ and a $\bbP_{\alpha_{*}}$-name $\dot{B}$ for a Borel subset $\breals$ such that $p'$ forces the symmetric difference of $\dot{B}$ and the Borel reinterpretation of $A$ to be $\tau$-null.
  To do this, fix such $p$, $A$ and $\tau$, and fix in addition $X$, $\bar{Y}$, $\bar{\i}$ and $\bar{D}$ (with $\tau \in X$) such that
  $S = \langle \bar{C}, F, \bq, p, X, \bar{Y}, \bar{\i}, \bar{D} \rangle$ is a suitable data sequence. By Lemma \ref{fusionlemma}, we may fix in addition a condition tree \[(T, \langle p_{\rho} : \rho \in T\rangle)\] relative to $S$.
  There is then a continuous function $f$ from $[T]$ to the space of Borel measures on $\breals$ such that each value $f(x)$ is the realization of $\tau$ by
  $\{ p_{x \restrict n} : n \in \omega \}$. Let $\lambda$ be Lebesgue measure for $[T]$ and let $\nu$ be the measure on $\breals$ defined by setting $\nu(B)$ to be $\int_{[T]} f(x)(B)\,d\lambda$. Since $A$ is universally measurable,
  there exist Borel sets $B$ and $E$ contained in $\breals$ such that $E$ is $\nu$-null and $A \bigtriangleup B \subseteq E$.
  The same relationship will then hold for the (Borel) reinterpretations of $A$, $B$ and $E$ in any forcing extension by $\bbP_{\alpha_{*}}$.
  Let $K$ be a closed non-null set of $x \in [T]$ for which $E$ is $f(x)$-null.

  The statement that $E$ is $f(x)$-null for each $x \in K$ is $\Pi^{1}_{2}$ in codes for $f$, $E$ and $K$, and therefore absolute. One way to see this is to note that for any wellfounded model $M$ of a sufficiently large fragment of $\ZFC$ containing these codes and some $x \in K$, $M$ computes some values for the $f(x)$-measures of $E$ and its complement, and it must compute them correctly.

  We can then apply Lemma \ref{finallemma} for $K$ in the case $i=0$, $j = i_{*}$.
  The resulting condition $q'$ then forces that the realization of $\tau$ in the $\bbP_{\alpha_{*}}$-extension will be in the $f$-image of $K$. To see that the reinterpretations of $B$ and $E$ in this extension will witness that the Borel reinterpretation of $A$ is measurable relative to the realization of $\tau$, we need to see that the reinterpretation of $E$ is $f(x)$-null in this extension for all $x$ in the reinterpretation of $K$.
  \end{proof}

  Essentially the same argument shows that extensions by partial orders of the form $\bbP_{\alpha_{*}}$ are measured, in the sense of \cite{LZpime}.

\begin{remark} A set $A \subseteq \breals$ is universally null if $f^{-1}[A]$ is Lebesgue-null whenever $f \colon \breals \to \breals$ is a Borel function, and universally meager if $f^{-1}[A]$ is meager whenever $f \colon \breals \to \breals$ is a Borel function. Lemma \ref{representinglem} (along with the fact that universally null and universally meager sets cannot contain perfect sets) shows that in the forcing extensions we consider here, all universally null or universally meager sets have cardinality at most $\aleph_{1}$. It is an open question whether the classes of universally null and universally meager sets can coincide. This question is not answered by the arguments in this paper. A standard recursive construction (a version of which appears in \cite{LrNeSh}; see also \cite{BL}) shows that the Continuum Hypothesis implies the existence of nonmeager universally null sets. Since the iterations considered here preserve nonmeager sets (via Theorem 5.2 of \cite{KellSh} as cited in Remark \ref{pathpresrem}), Theorem \ref{umeasreint} implies that there exist such sets in the models produced in this paper. Since universally null sets cannot contain perfect sets, a nonmeager universally null set cannot have the property of Baire. This shows that there are universally measurable sets in the extensions produced in this paper which are not universally categorical.
\end{remark}

\begin{remark} When $a \subseteq \omega$, the inner model $\bL[a]$ contains a $\Delta^{1}_{2}(a)$ wellordering of its reals which remains $\Delta^{1}_{2}(a)$ in any outer model. Since the iterations considered here preserve non-null sets (via Theorem 6.3 and Lemma 6.4 of \cite{KellSh} as cited in Remark \ref{pathpresrem}), the reals of $\bL[a]$ form a non-null set in the models considered here. By \cite{Reclaw}, any set of reals wellordered by a universally measurable relation is universally null. It follows in the extensions produced in this paper, the canonical wellordering of the reals constructed in $\bL[a]$ is a $\Delta^{1}_{2}(a)$ set which is not universally measurable. The analogous results for category (\cite{KellSh} and \cite{Reclaw} again) show that this set is also not universally categorical.
\end{remark}

\end{document}